\documentclass[a4paper,12pt]{article}
\pdfoutput=1
\usepackage{amssymb, amsmath, bbm, amsthm}
\usepackage{fullpage}
\usepackage{url}
\usepackage{hyperref}
\usepackage{comment}
\usepackage{graphicx}
\theoremstyle{plain}
\newtheorem{theorem}{Theorem}[section]
\newtheorem{lemma}[theorem]{Lemma}
\newtheorem{corollary}[theorem]{Corollary}
\newtheorem{proposition}[theorem]{Proposition}
\newtheorem{example}[theorem]{Example}

\numberwithin{equation}{section}

\theoremstyle{definition}
\newtheorem{definition}[theorem]{Definition}

\theoremstyle{remark}
\newtheorem{remark}[theorem]{Remark}

\newcommand{\R}{{\mathbb R}}

\newcommand{\N}{{\mathbb N}}

\newcommand{\F}{{\mathbb F}}

\newcommand{\Prob}{\mathbb{P}}

\newcommand{\E}{\mathbb{E}}

\newcommand{\Indi}{\mathop{\mathbbm{1}}\nolimits}

\newcommand{\mmu}{\mathop {\bar{x}_{\mu}}}

\newcommand{\supp}{\mbox{supp}}

\usepackage{color}

\usepackage{asymptote}

\begin{asydef}
usepackage("amsmath");
texpreamble("\newcommand{\mmu}{\mathop {\bar{x}_{\mu}}}");
\end{asydef}

\title{From minimal embeddings to minimal diffusions}

\author{Alexander
  M.G. Cox\thanks{Department of Mathematical Sciences, University of
    Bath, U.K. \href{mailto:a.m.g.cox@bath.ac.uk}{\nolinkurl{a.m.g.cox@bath.ac.uk}}}
\and
  Martin
  Klimmek\thanks{Nomura Centre for Mathematical Finance, Mathematical
 Institute, University of Oxford, U.K. \href{mailto:martin.klimmek@maths.ox.ac.uk}{\nolinkurl{martin.klimmek@maths.ox.ac.uk}}}}

\date{\today}
\bibliographystyle{plain}

\begin{document}

\maketitle

\begin{abstract}
  There is a natural connection between the class of
  diffusions, and a certain class of solutions to the Skorokhod
  Embedding Problem (SEP). We show that the important concept
  of minimality in the SEP leads to the new and useful concept of a {\it   
  minimal diffusion}. Minimality is closely related to the martingale 	
  property. A diffusion is minimal if it minimises the expected local time at
  every point among all diffusions with a given distribution at an
  exponential time. Our approach makes explicit the connection between
  the boundary behaviour, the martingale property and the local time
  characteristics of time-homogeneous diffusions.

\end{abstract}

\section{Introduction}
This article shows that there is a one-to-one correspondence between a
(generalised) diffusion in natural scale and a class of solutions to
the Skorokhod Embedding Problem (SEP) for a Brownian motion. The main
contribution of this article is to establish the concept of a {\it
  minimal diffusion}, which is motivated by the correspondence.

The link between embedding and diffusion involves an additive
functional which can be used to define both a diffusion (via a time
change) and a stopping time (the first time that the additive
functional exceeds an independent exponentially distributed random
variable). The fundamental connection was first observed by Cox et.\
al.\ in \cite{CoxHobsonObloj:2011}. In that article, the authors
proved existence of a (martingale) diffusion with a given law at an
exponentially distributed time by exploiting the link between
martingale diffusions and minimal embeddings. It was then demonstrated
in \cite{MK:12} that a diffusion's speed measure can be represented in
terms of its exponential time law without recourse to embedding
theory.

The general theme of this paper is that we can exploit the
correspondence between diffusions and the SEP to introduce notions
from the theory of SEP into the diffusion setting. This leads us to
introduce the novel concept of a {\it minimal diffusion}, which we
argue is the canonical diffusion in the family of diffusions with
given law at a random time, and corresponds to the minimal embedding.




In the literature on the SEP, the concept of a minimal stopping time
is crucial. The problem of constructing stopping times so that a
stopped process has a given law degenerates unless there is a notion
of what constitutes a `good' solution. An early definition of `good'
was `finite expectation'. The concept of a `minimal solution' was
first introduced by Monroe, \cite{Monroe:72}. Loosely speaking, a
stopping time is minimal if there is no almost surely smaller stopping
time for the process which results in the same distribution. The
property of minimality connects much of the modern literature on
Skorokhod embeddings and their applications in mathematical
finance. The main contribution of this article is to show that the
concept of minimality extends naturally as a property of
diffusions. In leading up to this result, we provide a novel
characterisation of minimal stopping times in terms of the local
times, which is important in transferring the concept of minimality
from solutions of the SEP to diffusions.


From the point of view of classical diffusion theory, the article provides 
insight into the additive functional that defines a diffusion and introduces
the intuitive concept of `minimality' to diffusion theory. We argue that 
the property of minimality is in many ways more natural than the 
martingale property. The concept of minimality dis-entangles 
distributional properties of the process from its boundary behaviour. There 
may not exist a martingale diffusion with a given exponential time law, but
there is always a minimal diffusion.

Our study of this problem was motivated in part by the observation
that the construction of time-homogenous diffusions via time-change
was closely related to the solution to the SEP given by Bertoin \& Le
Jan \cite{BLJ}. Indeed, one of the proofs in
\cite{CoxHobsonObloj:2011} relied on explicitly constructing a process
which enabled the use of the results of \cite{BLJ}. The Bertoin Le-Jan
(BLJ) embedding stops a process at the first time an additive
functional grows larger than the accumulated local time at the
starting point. However many of the same quantities appear in the
construction of both the BLJ embedding and the time-homogenous
diffusion. By considering the BLJ embedding, we are able to explain
why this is the case, and thereby provide new insight into the BLJ
construction, and its connection to time-homogenous diffusions.


\section{Preliminaries}

\subsection{Generalised Diffusions}
Let $m$ be a non-negative, non-zero Borel measure on an interval $I \subseteq \R$, with left endpoint $a$ and right endpoint $b$ (either or both of which may be infinite). Let $x_0 \in (a,b)$ and let $B=(B_t)_{t \geq 0}$ be a Brownian motion started at $B_0=x_0$ supported on a filtration $\F^B=({\mathcal F}_u^B)_{u\geq 0}$ with local time process $\{ L_u^x ; u \geq 0, x \in \R \}$. Define $\Gamma$ to be the continuous, increasing, additive functional
\[\Gamma_u = \int_{\R} L_u^x m(dx),\]
and define its right-continuous inverse by
\[A_t = \inf \{u \ge 0 : \Gamma_u > t \}. \] If $X_t = B_{A_t}$ then $X=(X_t)_{t \geq 0}$ is a one-dimensional generalised diffusion in natural scale with $X_0=x_0$ and speed measure $m$. Moreover, $X_t \in I$ almost surely for all $t \geq 0$. In contrast to diffusions, which are continuous by definition, generalised diffusions may be discontinuous if the speed measure places no mass on an interval. For instance, if the speed measure is purely atomic, then the process is a birth-death process in the sense of Feller \cite{feller}. See also Kotani and Watanabe \cite{kotaniwatanabe}. In the sequel, we will use diffusion to denote the class of generalised diffusions, rather than continuous diffusions.

Let $H_x=\inf\{u:X_u=x\}$. Then for $\lambda>0$ (see e.g. \cite{Salminen}),
\begin{equation} \label{eq:eigenfunction}
\E_{x}[e^{-\lambda H_y}]= \left\{\begin{array}{ll}
\frac{\varphi_\lambda(x)}{\varphi_\lambda(y)}  &\; x \leq y \\
\frac{\phi_\lambda(x)}{\phi_\lambda(y)} &\; x \geq y ,
\end{array}\right.
\end{equation}
where $\varphi_\lambda$ and $\phi_\lambda$ are respectively a strictly increasing and a strictly decreasing solution to the differential equation
\begin{equation} \label{eq:differentialsc}
\frac{1}{2} \frac{d^2}{dm dx} f = \lambda f.
\end{equation}
The two solutions are linearly independent with Wronskian 
$W_\lambda=\varphi_\lambda' \phi_\lambda-\phi_\lambda' \varphi_\lambda$, which is a positive constant. 

The solutions to (\ref{eq:differentialsc}) are called the $\lambda$-eigenfunctions of the diffusion. We will scale the $\lambda$-eigenfunctions so that $\varphi_\lambda(x_0)=\phi_\lambda(x_0)=1$. 

\subsection{The Skorokhod Embedding Problem}
We recall some important notions relating to the Skorokhod Embedding Problem (SEP). The SEP can be stated as follows: given a Brownian motion $(B_t)_{t \ge 0}$ (or, more generally, some stochastic process) and a measure $\mu$ on $\R$, a solution to the SEP is a stopping time $\tau$ such that $B_{\tau} \sim \mu$. We refer to Ob\l\'oj \cite{Obloj:04} for a comprehensive survey of the history of the Skorokhod Embedding Problem.

Many solutions to the problem are known, and it is common to require some additional assumption on the process: for example that the stopped process $(B_{t \wedge \tau})_{t \ge 0}$ is uniformly integrable. In the case where $B_0 = x_0$, this requires some additional regularity on $\mu$ --- specifically that $\mu$ is integrable, and $x_0 = \bar{x}_\mu = \int y \, \mu(dy)$. We recall a more general notion due to Monroe \cite{Monroe:72}:

\begin{definition} \label{def:minimal}
  A stopping time $\tau$ is {\it minimal} if, whenever $\sigma$ is
  another stopping time with $B_{\sigma} \sim B_{\tau}$ then $\sigma
  \le \tau \ \Prob$-a.s.{} implies $\sigma = \tau \ \Prob$-a.s..
\end{definition}

That is, a stopping time $\tau$ is minimal if there is no strictly
smaller stopping time which embeds the same distribution. It was
additionally shown by Monroe that if the necessary condition described
above was true ($\mu$ integrable with mean $x_0$) then minimality of
the embedding $\tau$ is equivalent to uniform integrability of the
stopped process. In the case where the means do not agree, minimality
of stopping times was investigated in Cox and Hobson
\cite{CoxHobson:06} and Cox \cite{Cox:08}. Most natural solutions to
the Skorokhod Embedding problem can be shown to be minimal. In some
cases, constructions can be extended to non-minimal stopping times ---
see for example Remark~4.5 in \cite{cox_roots_2013}.

To motivate some of our later results, we give the following alternative characterisation of minimality for stopping times of Brownian motion. We note that this condition was first introduced by Bertoin and Le Jan \cite{BLJ}, as a property of the BLJ embedding. We write $L_t^a$ for the local time at the level $a$. When $a=0$, we will often simply write $L_t$.

\begin{lemma} \label{lem:minimal}
  Let $\mu$ be an integrable measure, and suppose $\tau$ embeds $\mu$ in a Brownian motion $(B_{t})_{t \ge 0}$ with $B_0 = x_0$. Let $a \in \R$ be fixed. Then $\tau$ is minimal if and only if $\tau$ minimises $\E[L_{\sigma}^a]$ over all stopping times $\sigma$ embedding $\mu$.
\end{lemma}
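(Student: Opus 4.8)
The plan is to reduce the statement to a one-sided optional-stopping estimate via Tanaka's formula, and then to match the resulting uniform-integrability condition with the known characterisations of minimality. Fix $a$ and let $\sigma$ be any embedding of $\mu$ (so $\sigma<\infty$ $\Prob$-a.s. and $B_\sigma\sim\mu$). Tanaka's formula applied to the convex function $x\mapsto(x-a)^+$ gives
\[
(B_{t\wedge\sigma}-a)^+=(x_0-a)^++P^a_{t\wedge\sigma}+\tfrac12 L^a_{t\wedge\sigma},\qquad P^a_t:=\int_0^t\Indi_{\{B_s>a\}}\,dB_s,
\]
so that, taking expectations and using $B_\sigma\sim\mu$,
\[
\tfrac12\,\E[L^a_\sigma]=\int_\R (y-a)^+\,\mu(dy)-(x_0-a)^+-\E[P^a_\sigma].
\]
The first two terms depend only on $\mu,a,x_0$, so minimising $\E[L^a_\sigma]$ over embeddings is the same as maximising $\E[P^a_\sigma]$. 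Since $P^a_{t\wedge\sigma}\ge-(x_0-a)^+-\tfrac12 L^a_\sigma$, whenever $\E[L^a_\sigma]<\infty$ the stopped local martingale $P^a_{\cdot\wedge\sigma}$ is bounded below by an integrable random variable, hence a supermartingale, so $\E[P^a_\sigma]\le0$ by Fatou, with equality if and only if $P^a_{\cdot\wedge\sigma}$ is a uniformly integrable martingale --- equivalently (since $L^a_{\cdot\wedge\sigma}$ is then dominated by the integrable $L^a_\sigma$) if and only if $\{(B_{t\wedge\sigma}-a)^+\}_{t\ge0}$ is uniformly integrable. Writing $C^+_a:=2\big(\int(y-a)^+\mu(dy)-(x_0-a)^+\big)$, this gives the lower bound $\E[L^a_\sigma]\ge C^+_a$, tight exactly when the positive part of the stopped process is uniformly integrable. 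The symmetric computation with $(a-x)^+$ produces a companion bound $\E[L^a_\sigma]\ge C^-_a$, tight exactly when the negative part is uniformly integrable.

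The second step is to identify which bound is binding and to connect it with minimality. A direct computation gives $C^+_a-C^-_a=2(\mmu-x_0)$, so the binding bound is $C^+_a$ when $\mmu>x_0$, $C^-_a$ when $\mmu<x_0$, and $C^+_a=C^-_a$ when the means agree. On the other hand, the characterisation of minimality (Monroe \cite{Monroe:72} in the centred case, Cox--Hobson \cite{CoxHobson:06} and Cox \cite{Cox:08} in general) states precisely that $\tau$ is minimal if and only if the stopped process is uniformly integrable on the binding side: both parts when $\mmu=x_0$ (full uniform integrability), the positive part when $\mmu>x_0$, and the negative part when $\mmu<x_0$. Combining the two steps, $\tau$ minimises $\E[L^a_\sigma]$ if and only if it attains the binding one-sided bound, if and only if the corresponding one-sided uniform-integrability condition holds, if and only if $\tau$ is minimal. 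Since a minimal embedding of an integrable $\mu$ exists, the minimum is genuinely attained and equals $\max(C^+_a,C^-_a)$.

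I expect the main obstacle to be the non-centred case. When $\mmu\ne x_0$ the stopped martingale $B_{\cdot\wedge\tau}$ is never uniformly integrable, so one cannot argue through the familiar $|B-a|$ submartingale as in Monroe's centred setting, and the two-sided bound $\E[L^a_\sigma]\ge\int|y-a|\,\mu(dy)-|x_0-a|$ (the average of $C^+_a$ and $C^-_a$) is not tight; it is essential to split the Tanaka formula into its two one-sided pieces and to verify that the side on which minimal embeddings are uniformly integrable is exactly the side carrying the binding bound $\max(C^+_a,C^-_a)$ --- this is what makes $\mmu-x_0$ appear with the correct sign. A secondary point requiring care is that the characterisation holds for each \emph{fixed} level $a$: one must check that uniform integrability of $\{(B_{t\wedge\tau}-a)^+\}$ does not depend on $a$ (it is equivalent to uniform integrability of $\{B_{t\wedge\tau}^+\}$, since the two processes differ by at most the constant $|a|$), which is consistent with minimality being a level-free property and shows that minimising $\E[L^a_\sigma]$ for one $a$ already forces it for all $a$. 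Finally, the optional-stopping steps rest on the standing reduction that embeddings with $\E[L^a_\sigma]=\infty$ are never minimisers, so that the supermartingale argument may always be run under $\E[L^a_\sigma]<\infty$.
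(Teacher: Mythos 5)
Your overall route is the same as the paper's: Tanaka's formula plus an optional-stopping/Fatou argument to get a lower bound on $\E[L^a_\sigma]$ depending only on $\mu$, $x_0$ and $a$, followed by the Cox--Hobson uniform-integrability characterisation of minimality to decide when the bound is attained. In one respect your version is actually more careful than the paper's: you derive \emph{both} one-sided bounds $C^{\pm}_a$ and observe that the binding one is determined by the sign of $\bar{x}_\mu-x_0$, whereas the paper works only with the decomposition $|x|=x+2x_-$, i.e.\ only with your $C^-_a$, and only with uniform integrability of the negative part. That choice of side is correct only when $\bar{x}_\mu\le x_0$: for $\mu=\delta_{x_0+1}$ the (unique) minimal embedding $H_{x_0+1}$ has $\E[L^{x_0}_{H_{x_0+1}}]=2=C^+_{x_0}$ while $C^-_{x_0}=0$, and its negative part is not uniformly integrable. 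So your insistence on splitting into the two one-sided pieces and tracking which is binding is a genuine improvement, not a detour.

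There is, however, a gap in the step ``tight exactly when the positive part of the stopped process is uniformly integrable.'' The implication (bound tight) $\Rightarrow$ ($P^a_{\cdot\wedge\sigma}$ is a closed UI martingale) $\Rightarrow$ (one-sided UI) is fine, but the converse direction you need --- one-sided UI of $\{(B_{t\wedge\sigma}-a)^+\}_{t\ge 0}$ at \emph{deterministic} times implies $\E[P^a_\sigma]=0$ --- does not follow: it only gives that the local martingale $P^a_{\cdot\wedge\sigma}$ is uniformly integrable along deterministic times, and a uniformly integrable local martingale need not be a true martingale, so $\E[P^a_\sigma]<0$ is not excluded. Since your chain of equivalences uses this direction in both implications of the lemma (to show that minimal embeddings attain the bound, and to show that the infimum over embeddings actually equals $\max(C^+_a,C^-_a)$), the gap infects the whole argument. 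The paper avoids exactly this issue by running the identity $\E[L_{\tau_N}]=-|x_0|+\E[B_{\tau_N}]+2\E[(B_{\tau_N})_-]$ along a localising sequence $\tau_N$, where $\E[B_{\tau_N}]=x_0$ holds by construction, and then invoking uniform integrability of $\{(B_{\tau_N})_-\}$ \emph{at those stopping times} (which is the form in which the Cox--Hobson result is quoted). To close your gap you should either do the same, or cite the Cox--Hobson characterisation in its stopping-time (class D / strong supermartingale) form rather than the deterministic-time form; as written, the sentence beginning ``equivalently (since $L^a_{\cdot\wedge\sigma}$ is then dominated\dots)'' asserts an equivalence of which only one direction is proved.
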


\begin{proof}
  Without loss of generality, we may assume $a=0$. Let $\tau$ be an
  embedding of $\mu$. Note that $L_t-|B_t|$ is a local martingale. Let
  $\tau_N$ be a localising sequence of stopping times such that $\tau_N \uparrow \tau$ and $\{L_{t\wedge \tau_N}-|B_{t\wedge \tau_N}|\}$ is a family of martingales. We have 
  \begin{equation*}
    \E[L_{\tau_N}] = -|x_0| + \E[|B_{\tau_N}|]
  \end{equation*}
  and hence
  \begin{equation*}
    \lim_{N \to \infty}\E[L_{\tau_N}] = -|x_0| + \lim_{N \to \infty}\E[|B_{\tau_N}|].
  \end{equation*}
  Since $|x| = x + 2 x_-$ (where $x_- = \max\{0,-x\}$) we can write
  \begin{equation*}
     \lim_{N \to \infty}\E[|B_{\tau_N}|] = \lim_{N \to \infty} \E
     [B_{\tau_N}] + 2\lim_{N \to \infty} \E[ ( B_{\tau_N})_-].
  \end{equation*}
  Since $\tau_N$ is a localising sequence, $\lim_{N \to \infty} \E
  [B_{\tau_N}] = x_0$, while, by Fatou's Lemma, $\lim_{N \to \infty}
  \E [( B_{\tau_N})_-] \ge \E [(B_{\tau})_-]$, and the final term depends
  only on $\mu$. So $\E [ L_\tau] \ge x_0-|x_0| + 2 \E [(B_{\tau})_-]$.

  This is true for any embedding $\tau$, however in the case where
  $\tau$ is minimal, we observe that $\left\{(B_{\tau_N})_-\right\}_{N
    \in \N}$ is a UI family, by Theorem~5 of \cite{CoxHobson:06}, and
  so we have the equality: $\lim_{N \to \infty} \E [( B_{\tau_N})_-] =
  \E [(B_{\tau})_-]$, and hence $\E [ L_\tau] = x_0-|x_0| + 2 \E [(B_{\tau})_-]$.

  For the converse, again using Theorem~3 of \cite{CoxHobson:06}, we
  observe that it is sufficient to show that $\left\{(B_{t \wedge
      \tau})_-\right\}_{t \ge 0}$ is a UI family.

  Note that for any integrable measure $\mu$ a minimal embedding
  exists, and therefore any embedding $\tau$ which minimises
  $\E[L_{\tau}]$ over the class of embeddings must have $\lim_{t \to
    \infty} \E[( B_{\tau \wedge t})_-] = \E
  [(B_{\tau})_-]$. However, suppose for a contradiction that
  $\left\{(B_{t \wedge \tau})_-\right\}_{t \ge 0}$ is not a UI
  family. Since $(B_{t \wedge \tau})_- \to (B_{\tau})_-$ in
  probability as $t \to \infty$, this implies we cannot have
  convergence in $\mathcal{L}^1$ (otherwise the sequence would be
  UI). It follows that
  \begin{equation*}
    \E \left[ | (B_{t \wedge \tau})_- - (B_{\tau})_-|\right] \to \varepsilon >0,
  \end{equation*}
  as $t \to \infty$. But 
  \begin{equation*}
    \E \left[| (B_{t \wedge \tau})_- - (B_{\tau})_-| \right]\le \E \left[\left|\left(
        (B_{t \wedge \tau})_- - (B_{\tau})_-\right) \Indi_{\{\tau
        \le t\}}\right|\right] + \E [ (B_\tau)_- \Indi_{\{\tau
        > t\}}] + \E [ (B_t)_- \Indi_{\{\tau
        > t\}}].
  \end{equation*}
  It follows that $\lim_{t \to \infty}\E[(B_{t})_- \Indi_{\{\tau >
    t\}}] \ge \varepsilon$. But $\E[(B_{t \wedge \tau})_-
  \Indi_{\{\tau \le t\}}] \to \E[(B_{\tau})_-]$, and hence
  $\lim_{t \to \infty} \E[( B_{\tau \wedge t})_-] > \E
  [(B_{\tau})_-]$.
  
  



\end{proof}

\section{Diffusions with a given law at an exponential time}

Let us begin by recalling the construction of a diffusion's speed measure in terms of its exponential time law in \cite{MK:12}. 

Given an integrable probability measure $\mu$ on $I$, let
$U^\mu(x)=\int_{I} |x-y| \mu(dy)$, $C^\mu(x)=\int_{I} (y-x)^+ \mu(dy)$
and $P^\mu(x)=\int_{I}(x-y)^+ \mu(dy)$. Let $T_{\lambda}$ be an
exponentially distributed random variable, independent of $B$ with
mean $1/\lambda$. The following theorem summarises the main results in
\cite{MK:12}.

\begin{theorem} \label{t:main}
Let $X=(X_t)_{t \geq 0}$ be a diffusion in natural scale with Wronskian $W_\lambda$. Then $X_{T_{\lambda}} \sim \mu$ if and only if the speed measure of $X$ satisfies
\begin{equation} \label{eq:speeddecomp}
m(dx)= \left\{\begin{array}{ll}
\frac{1}{2\lambda} \frac{\mu(dx)}{P^\mu(x)-P^\mu(x_0)+1/W_\lambda},  &\;   a < x \leq x_0 \\
\frac{1}{2\lambda} \frac{\mu(dx)}{C^\mu(x)-C^\mu(x_0)+1/W_\lambda},  &\; x_0 \leq x < b.  
\end{array}\right.
\end{equation}
\end{theorem}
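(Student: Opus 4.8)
The plan is to route everything through the $\lambda$-resolvent (Green's function) of the diffusion, and then to identify the eigenfunctions $\varphi_\lambda,\phi_\lambda$ with affine transforms of the put and call functions $P^\mu,C^\mu$. First I would record the probabilistic meaning of the exponential-time law: since $T_\lambda$ is independent of $X$ and exponential with rate $\lambda$, for bounded measurable $f$ one has $\E_{x_0}[f(X_{T_\lambda})]=\lambda\int_0^\infty e^{-\lambda t}\E_{x_0}[f(X_t)]\,dt=\lambda R_\lambda f(x_0)$, where $R_\lambda$ is the resolvent. The classical resolvent representation for a diffusion in natural scale (see e.g.\ \cite{Salminen}), which underlies \eqref{eq:eigenfunction}, gives a density with respect to the speed measure built from the $\lambda$-eigenfunctions: the $\lambda$-resolvent density is $\tfrac{2}{W_\lambda}\varphi_\lambda(x\wedge y)\phi_\lambda(x\vee y)$, the factor $2$ arising from the $\tfrac12$ in \eqref{eq:differentialsc} (one checks the constant against Brownian motion, where it reduces to the familiar $\tfrac{1}{\sqrt{2\lambda}}e^{-\sqrt{2\lambda}|x-y|}$). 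Evaluating at $x=x_0$ and using $\varphi_\lambda(x_0)=\phi_\lambda(x_0)=1$ shows that $X_{T_\lambda}\sim\mu$ if and only if
\[
\mu(dy)=\tfrac{2\lambda}{W_\lambda}\,\varphi_\lambda(y)\,m(dy)\quad(a<y\le x_0),\qquad \mu(dy)=\tfrac{2\lambda}{W_\lambda}\,\phi_\lambda(y)\,m(dy)\quad(x_0\le y<b),
\]
equivalently $m(dy)=\tfrac{W_\lambda}{2\lambda}\mu(dy)/\varphi_\lambda(y)$ on $(a,x_0]$ and $m(dy)=\tfrac{W_\lambda}{2\lambda}\mu(dy)/\phi_\lambda(y)$ on $[x_0,b)$. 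This is already a genuine equivalence, since the left-hand density is a fixed function of $y$.

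The main step is then to show that, whenever the above relation holds, $\varphi_\lambda(y)=W_\lambda\big(P^\mu(y)-P^\mu(x_0)\big)+1$ for $y\le x_0$ and $\phi_\lambda(y)=W_\lambda\big(C^\mu(y)-C^\mu(x_0)\big)+1$ for $y\ge x_0$; substituting either identity into the displayed expression for $m$ yields \eqref{eq:speeddecomp} directly (the prefactor $\tfrac{W_\lambda}{2\lambda}$ combined with the denominator $W_\lambda(P^\mu-P^\mu(x_0))+1$ produces exactly $\tfrac{1}{2\lambda}$ and the additive $1/W_\lambda$). For the first identity I would compute $P^\mu$ directly: using the relation above, $P^\mu(y)=\int_a^y(y-z)\mu(dz)=\tfrac{2\lambda}{W_\lambda}\int_a^y(y-z)\varphi_\lambda(z)\,m(dz)$, and the integrated form of \eqref{eq:differentialsc}, namely $d\varphi_\lambda'(z)=2\lambda\varphi_\lambda(z)\,m(dz)$, lets me replace $\varphi_\lambda(z)\,m(dz)$ by $\tfrac{1}{2\lambda}\,d\varphi_\lambda'(z)$. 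Integrating by parts then gives
\[
P^\mu(y)=\tfrac{1}{W_\lambda}\Big(\varphi_\lambda(y)-\varphi_\lambda(a^+)-(y-a)\varphi_\lambda'(a^+)\Big),
\]
so that forming the difference $P^\mu(y)-P^\mu(x_0)$ cancels the $\varphi_\lambda(a^+)$ term and leaves a remainder proportional to $\varphi_\lambda'(a^+)$. The identity therefore holds precisely when $\varphi_\lambda'(a^+)=0$, and the symmetric computation at the right endpoint gives the $\phi_\lambda$ identity provided $\phi_\lambda'(b^-)=0$.

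The hard part is exactly this boundary analysis: establishing $\varphi_\lambda'(a^+)=0$ and $\phi_\lambda'(b^-)=0$. These are the correct endpoint conditions for the increasing/decreasing eigenfunctions of a diffusion that remains in $I$ almost surely (reflecting or inaccessible boundaries). The intuition is that $\varphi_\lambda'$ is nonnegative and nondecreasing (by $d\varphi_\lambda'=2\lambda\varphi_\lambda\,dm\ge0$) while $\varphi_\lambda>0$; a strictly positive limit $\varphi_\lambda'(a^+)$ would force $\varphi_\lambda$ to decrease without bound towards $a$, contradicting positivity, so the limiting slope must vanish. Making this rigorous for all admissible boundary types, and controlling the endpoint terms using integrability of $\mu$ (finiteness of $P^\mu,C^\mu$), is where the real work lies.

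Finally, for the converse direction I would not reuse the identity (which was derived under the relation) but instead verify it intrinsically. Given that $m$ is of the form \eqref{eq:speeddecomp}, set $g(y):=W_\lambda\big(P^\mu(y)-P^\mu(x_0)\big)+1$ on $(a,x_0]$; since $(P^\mu)''=\mu$ one checks that $dg'=W_\lambda\,\mu=2\lambda g\,m$, so $g$ solves \eqref{eq:differentialsc}, and $g$ is positive and increasing with $g(x_0)=1$ and the boundary condition $g'(a^+)=0$ (as $\mu$ is supported in $I$). By the characterisation of the increasing eigenfunction through this boundary condition, $g=\varphi_\lambda$; then $\tfrac{2\lambda}{W_\lambda}\varphi_\lambda(y)\,m(dy)=\mu(dy)$ by construction, so the law of $X_{T_\lambda}$ is $\mu$, closing the equivalence.
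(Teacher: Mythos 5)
The paper never proves Theorem~\ref{t:main} itself (it is quoted from \cite{MK:12}), so the fair comparison is with the identity the paper does invoke, namely $\varphi_\lambda(y)=W_\lambda\bigl(P^\mu(y)-P^\mu(x_0)\bigr)+1$ in Corollary~\ref{c:lpotential}. Your architecture is exactly that: resolvent density $\tfrac{2}{W_\lambda}\varphi_\lambda(x\wedge y)\phi_\lambda(x\vee y)$ with respect to $m$, then the identification of the eigenfunctions with affine transforms of $P^\mu$ and $C^\mu$. The resolvent computation, the substitution $d\varphi_\lambda'=2\lambda\varphi_\lambda\,dm$, the integration by parts, and the verification in the converse direction that $g=W_\lambda(P^\mu-P^\mu(x_0))+1$ solves \eqref{eq:differentialsc} are all correct.

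The gap is at the step you yourself flag as the crux, and the heuristic you offer there is wrong. You justify $\varphi_\lambda'(a+)=0$ by saying a positive limiting slope would force $\varphi_\lambda$ to decrease without bound towards $a$; that argument is valid only when $a=-\infty$. Over a finite interval $\varphi_\lambda$ drops by at most $\varphi_\lambda'(x_0)(x_0-a)<\infty$ and need not lose positivity, and the claim is genuinely false for finite endpoints charged by $m$: in Example~\ref{ex:jump} with $W_\lambda=6$ one has $\varphi_\lambda(x)=2x$ on $(0,1/2]$, so $\varphi_\lambda'(0+)=2\neq 0$, and a sticky reflecting boundary with $0<m(\{a\})<\infty$ gives $\varphi_\lambda'(a+)=2\lambda\,\varphi_\lambda(a)\,m(\{a\})>0$. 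In those cases $\mu$ carries an atom at $a$, your substitution $\mu(dz)=\tfrac{1}{W_\lambda}\,d\varphi_\lambda'(z)$ (derived only on the open interval) misses exactly that contribution to $P^\mu(y)$, and the identity is rescued because $W_\lambda\,\mu(\{a\})=\varphi_\lambda'(a+)$. So the lemma you actually need is the flux/conservativeness statement $W_\lambda\,\Prob(X_{T_\lambda}=a)=\lim_{y\downarrow a}\varphi_\lambda'(y)$ (together with $\lim_{y\downarrow a}(y-a)\varphi_\lambda'(y)=0$, suitably interpreted when $a=-\infty$ using integrability of $\mu$), which reduces to your boundary condition only when $\mu$ does not charge the boundary. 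This is precisely where the hypothesis that $X$ lives on $I$ with no killing must enter --- for Brownian motion on $(0,\infty)$ killed at $0$ one has $\varphi_\lambda(x)=\sinh(\sqrt{2\lambda}\,x)$ with $\varphi_\lambda'(0+)>0$ and the conclusion fails --- so it cannot be extracted from the ODE alone, and the same issue recurs in your converse when you assert that the boundary condition characterises $\varphi_\lambda$ among solutions of \eqref{eq:differentialsc}. Fixing the statement of the boundary lemma as above, and proving it from the construction of $X$ as a time-changed Brownian motion, would close the argument.
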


Since $m$ is positive, $1/W_\lambda \ge \max\{C^\mu(x_0),P^\mu(x_0)\}$. Note that $C^\mu(x_0) \geq (\leq) \ P^\mu(x_0)$ if $x_0 \leq \ (\geq) \ \bar{x}_\mu$. 

The decomposition of the speed measure in Theorem \ref{t:main} is
essentially related to the $\lambda$-potential of a diffusion. Recall
that for a diffusion $X$, the $\lambda$-potential (also known as the
resolvent density) of $X$ is defined as $u_\lambda(x,y)=\E_x
\left[\int_0^\infty e^{- \lambda t} dL_{A_t}^y(t)\right]$. This has
the natural interpretation that $\lambda
u_\lambda(x_0,y)=\E_{x_0}[L_{A_{T_{\lambda}}}^y]$ is the expected
local time of $X$ at $y$ up until the exponentially distributed time
$T_{\lambda}$.

\begin{corollary} \label{c:lpotential}
The $\lambda$-potential of a diffusion $X$ with $X_{T_{\lambda}} \sim \mu$ satisfies
\[
u_\lambda(x_0,y)= \left\{\begin{array}{ll}
2(P^\mu(y)-P^\mu(x_0))+2/W_\lambda,  &\;  a < y \leq x_0 \\
2(C^\mu(y)-C^\mu(x_0))+2/W_\lambda,  &\; x_0 \leq y < b.  
\end{array}\right.
\]
Moreover, $\varphi(x) = u_{\lambda}(x_0,x)$ for $x \le x_0$ and $\phi(x) = u_{\lambda}(x_0,x)$ for $x \ge x_0$.
\end{corollary}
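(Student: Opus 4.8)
The plan is to derive both assertions from the single identity that the law of $X_{T_\lambda}$ admits the density $\lambda u_\lambda(x_0,\cdot)$ with respect to the speed measure $m$. This is essentially a repackaging of the expected-local-time interpretation $\lambda u_\lambda(x_0,y)=\E_{x_0}[L^y_{A_{T_\lambda}}]$ recorded just before the statement. For a diffusion in natural scale the occupation formula reads $\int_0^t f(X_s)\,ds=\int_\R f(y)\,L^y_{A_t}\,m(dy)$ (this follows by writing $dt=d\Gamma_u$ along the time change $X_t=B_{A_t}$ and using $\Gamma_u=\int_\R L^x_u\,m(dx)$), so that, discounting by the independent exponential time $T_\lambda$ and applying Fubini,
\begin{equation*}
\E_{x_0}[f(X_{T_\lambda})]=\E_{x_0}\Big[\lambda\int_0^\infty e^{-\lambda t}f(X_t)\,dt\Big]=\lambda\int_\R f(y)\,\E_{x_0}[L^y_{A_{T_\lambda}}]\,m(dy)=\lambda\int_\R f(y)\,u_\lambda(x_0,y)\,m(dy)
\end{equation*}
for bounded measurable $f$. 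The only delicate point is the book-keeping that turns $\E_{x_0}[\int_0^\infty e^{-\lambda t}\,d_tL^y_{A_t}]$ into $\E_{x_0}[L^y_{A_{T_\lambda}}]$ through the time change and the discounting; I expect this (rather than anything conceptual) to be the main obstacle, and it is handled by an integration by parts in $t$ exploiting the independence of $T_\lambda$.

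Granting the display, since $X_{T_\lambda}\sim\mu$ it forces the equality of measures $\mu(dy)=\lambda u_\lambda(x_0,y)\,m(dy)$, so that $u_\lambda(x_0,y)=\tfrac1\lambda\,\frac{d\mu}{dm}(y)$ wherever $\mu$ charges. Theorem~\ref{t:main} gives $m$ explicitly as a density against $\mu$: for $a<y\le x_0$ we have $\frac{d\mu}{dm}(y)=2\lambda\big(P^\mu(y)-P^\mu(x_0)+1/W_\lambda\big)$, and dividing by $\lambda$ yields $u_\lambda(x_0,y)=2(P^\mu(y)-P^\mu(x_0))+2/W_\lambda$; the computation for $x_0\le y<b$ is identical with $C^\mu$ in place of $P^\mu$. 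This establishes the first display of the corollary, and as a sanity check it returns the correct diagonal value $u_\lambda(x_0,x_0)=2/W_\lambda$.

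For the second assertion I would identify $u_\lambda(x_0,\cdot)$, restricted to either side of $x_0$, with an eigenfunction by verifying directly that it solves \eqref{eq:differentialsc}, keeping everything within the framework of Theorem~\ref{t:main}. Differentiating $P^\mu(y)=\int(y-z)^+\mu(dz)$ twice gives $\tfrac{d^2}{dy^2}P^\mu=\mu$ in the distributional sense, so from the explicit formula $u_\lambda(x_0,y)=2\big(P^\mu(y)-P^\mu(x_0)+1/W_\lambda\big)$ one computes $\tfrac12\frac{d}{dm}\frac{d}{dy}u_\lambda(x_0,\cdot)=\frac{d(P^\mu)'}{dm}=\frac{\mu(dy)}{m(dy)}$, which by the density relation of the previous paragraph equals $\lambda\,u_\lambda(x_0,\cdot)$; the same holds on the right with $C^\mu$. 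Thus $u_\lambda(x_0,\cdot)$ is a solution of \eqref{eq:differentialsc} which is manifestly increasing for $y\le x_0$ and decreasing for $y\ge x_0$, hence coincides with the increasing eigenfunction $\varphi$ on the left and the decreasing eigenfunction $\phi$ on the right, in the normalization fixed by $\varphi(x_0)=\phi(x_0)=u_\lambda(x_0,x_0)=2/W_\lambda$. (Equivalently one may quote the classical Green-function representation $u_\lambda(x_0,y)=\tfrac{2}{W_\lambda}\varphi_\lambda(x_0\wedge y)\phi_\lambda(x_0\vee y)$, cf.\ \cite{Salminen}, but I would give the self-contained verification above.)
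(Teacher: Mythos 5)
Your route is genuinely different from the paper's. The paper simply quotes from \cite{MK:12} the formula $\E_y[e^{-\lambda H_{x_0}}]=W_\lambda(P^\mu(y)-P^\mu(x_0))+1$ for $y\le x_0$ (and the $C^\mu$ analogue for $y\ge x_0$) and combines it with the classical identity $u_\lambda(x_0,y)=\frac{2}{W_\lambda}\E_y[e^{-\lambda H_{x_0}}]$, which delivers the display and the identification with the eigenfunctions in one stroke, since $\E_y[e^{-\lambda H_{x_0}}]$ \emph{is} $\varphi_\lambda(y)$ (resp.\ $\phi_\lambda(y)$) in the paper's normalisation. Your first paragraph instead derives the occupation-density relation $\mu(dy)=\lambda u_\lambda(x_0,y)\,m(dy)$ and inverts the formula of Theorem~\ref{t:main}; this is a legitimate and rather more self-contained argument for the displayed formula, but note that it only pins down $u_\lambda(x_0,\cdot)$ $m$-a.e. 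Since $\mu$ may fail to charge whole subintervals (the paper's Example~\ref{ex:jump} is purely atomic), you still owe an argument that both sides are continuous and affine off $\supp(m)$ before you can claim the identity for \emph{all} $y$; this is fixable but not free.

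The genuine gap is in your proof of the ``moreover'' part. Verifying that $g(y):=2(P^\mu(y)-P^\mu(x_0))+2/W_\lambda$ solves \eqref{eq:differentialsc} and is increasing on $(a,x_0]$ does \emph{not} identify it with $\varphi_\lambda$ up to a scalar: the solution space is two-dimensional, and positivity plus monotonicity on $(a,x_0]$ need not single out a one-dimensional ray. For reflecting Brownian motion on $[0,\infty)$ with $\lambda=1/2$, both $\cosh(x)$ and $e^{x}$ are positive, increasing solutions of $\tfrac12 f''=\tfrac12 f$ on $(0,\infty)$, yet only $\cosh$ is the increasing eigenfunction (it alone satisfies $f'(0+)=0$ and hence $\E_0[e^{-\lambda H_y}]=1/\cosh(y)$). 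The correct identification requires matching the boundary condition at $a$ (equivalently, the probabilistic characterisation of $\varphi_\lambda$ via hitting times), which is exactly what the paper's citation of the hitting-time formula supplies and what your ``self-contained verification'' omits. Your parenthetical fallback --- quoting $u_\lambda(x_0,y)=\tfrac{2}{W_\lambda}\varphi_\lambda(x_0\wedge y)\phi_\lambda(x_0\vee y)$ --- is in fact the paper's argument and closes the gap; I would promote it from a remark to the actual proof of the second assertion.
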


\begin{proof}
It follows from the calculations in \cite{MK:12} (p.~4) that
\[
\E_y[e^{-\lambda H_{x_0}}]= \left\{\begin{array}{ll}
W_\lambda(P^\mu(y)-P^\mu(x_0))+1,  &\;  a< y \leq x_0 \\
W_\lambda(C^\mu(y)-C^\mu(x_0))+1,  &\; x_0 \leq y < b.  
\end{array}\right.
\]
Since $u_\lambda(x_0,y)=\frac{2}{W_\lambda} \E_y[e^{-\lambda H_{x_0}}]$ (cf. Theorem 50.7, V.50 in Rogers and Williams \cite{rogers} for classical diffusions and It\^o and McKean \cite{mckean} for generalised diffusions), the result follows.
\end{proof}

\begin{example} \label{ex:jump} Let $\mu=\frac{1}{3} \delta_{0} +
  \frac{1}{3} \delta_{1/2} + \frac{1}{3} \delta_{1}$. Then
  $P^\mu(x)=\frac{1}{3} x$ for $0 \leq x \leq \frac{1}{2}$ and
  $C^\mu(x)=\frac{1}{3}-\frac{1}{3}x$ for $1/2 \leq x \leq 1$.  Let
  $u_\lambda(1/2,x)$ be the $\lambda$-resolvent of a diffusion started
  at $1/2$ such that $X_{T_{\lambda}} \sim \mu$. Then
\[
u_\lambda(1/2,x)= \left\{\begin{array}{ll}
2(x/3-1/6)+2/W_\lambda,  &\;  0 \leq x \leq 1/2 \\
2(1/6-x/3)+2/W_\lambda,  &\; 1/2 \leq x \leq  1.  
\end{array}\right.
\]
where $W_\lambda \leq 6$. The speed measure of the consistent diffusion charges only the points $0$, $1/2$ and $1$ and is given by
\[
2 \lambda m(\{x\})= \left\{\begin{array}{ll}
\frac{3}{6/W_\lambda-1},  &\;  x=0, \\
\frac{W_\lambda}{2},  &\;  x=1/2, \\
\frac{3}{6/W_\lambda-1},  &\; x=1.  
\end{array}\right.
\]
Consistent diffusions are birth-death processes in the sense of Feller
\cite{feller}. $\frac{d\Gamma_t}{dt} > 0$ whenever $B_t \in \{0,1,2\}$
and the process is `sticky' there. The process $\Gamma_t$ is constant
whenever $B_t \notin \{0,1,2\}$, so that $A_t$ skips over these time
intervals and $X_t=B_{A_t}$ spends no time away from these
points. Note that if $W_\lambda=6$ then $m(\{0\})=m(\{1\})=\infty$ so
that $\Gamma_t=\infty$ for $t$ greater than the first hitting time of
$0$ or $1$ implying that the endpoints are absorbing. If
$W_\lambda<6$, the endpoints are reflecting but sticky.
\end{example}

Let $H_x=\inf\{u \geq 0 :X_u = x\}$ and let $y \in (a,b)$. 

\begin{lemma} \label{l:boundary}
$\int_{a+} (|x|+1) m(dx) = \infty$ $($resp. $\int^{b-} (|x|+1) m(dx) =
\infty)$ if and only if $1/W_\lambda = P^\mu(x_0)$
$($resp. $1/W_\lambda = C^\mu(x_0))$. 
\end{lemma}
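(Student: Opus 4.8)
The plan is to substitute the explicit speed measure from Theorem~\ref{t:main} and reduce the claim to a one-sided statement near $a$ (the statement at $b$ being identical with $C^\mu$ in place of $P^\mu$). Set $k:=1/W_\lambda-P^\mu(x_0)$; the remark after Theorem~\ref{t:main} gives $k\ge0$, and on $(a,x_0]$ the density reads $m(dx)=\tfrac1{2\lambda}\mu(dx)/(P^\mu(x)+k)$. Since $(x-y)^+\downarrow0$ forces $P^\mu(x)\downarrow0$ as $x\downarrow a$, the condition $1/W_\lambda=P^\mu(x_0)$ is exactly $k=0$, and by Corollary~\ref{c:lpotential} it is also exactly the vanishing $\varphi_\lambda(a+)=2(P^\mu(a+)+k)=2k=0$ of the increasing eigenfunction. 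So I must show $\int_{a+}(|x|+1)\,m(dx)=\infty$ if and only if $k=0$.

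The direction $k>0\Rightarrow$ finiteness is immediate: near $a$ the denominator is bounded below by $k$, so $\int_{a+}(|x|+1)\,m(dx)\le\frac1{2\lambda k}\int(|x|+1)\,\mu(dx)<\infty$ because $\mu$ is integrable. Everything therefore rests on the converse $k=0\Rightarrow$ divergence, i.e. $\int_{a+}\frac{(|x|+1)}{P^\mu(x)}\,\mu(dx)=\infty$. The engine is the elementary fact that a nondecreasing $g$ on $(a,c]$ with $g(c)>0$ and $g(a+)=0$ satisfies $\int_{(a,c]}g(x)^{-1}\,dg(x)=\infty$: picking $c=c_0>c_1>\cdots\downarrow a$ with $g(c_{n+1})\le\tfrac12 g(c_n)$ gives $\int_{(c_{n+1},c_n]}dg/g\ge 1-g(c_{n+1})/g(c_n)\ge\tfrac12$ on each block, and these sum to $\infty$. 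I would apply this with $g=(P^\mu)'=\tilde F$, where $\tilde F(x)=\mu([a,x))$, after separating two features. If $\mu$ has an atom at $a$ (so $a$ is finite), then $m(\{a\})=\mu(\{a\})/(2\lambda k)=\infty$, so the single point $a$ already forces divergence. If $\mu$ has no atom at $a$ and $a$ is finite, then $\tilde F(a+)=0$; convexity of $P^\mu$ with $P^\mu(a+)=0$ gives $P^\mu(x)\le(x-a)\tilde F(x)$, so on $(a,\tfrac{a+x_0}{2}]$, using $|x|+1\ge1$ and $\mu(dx)=d\tilde F$, $\frac{(|x|+1)\mu(dx)}{P^\mu(x)}\ge\frac1{x_0-a}\,\frac{d\tilde F(x)}{\tilde F(x)}$, and the displayed fact (with $g=\tilde F$) finishes the case.

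The remaining case $a=-\infty$ is where I expect the real difficulty. Now $|x|+1$ is unbounded and genuinely essential — examples show $\int_{a+}\mu(dx)/P^\mu(x)$ may converge while $\int_{a+}(|x|+1)\mu(dx)/P^\mu(x)$ diverges — and the convexity bound breaks down since $x-a=\infty$; indeed no pointwise estimate $P^\mu(x)\le C(|x|+1)(P^\mu)'(x)$ holds. My plan is to trade the weight $|x|+1$ for the affine (natural-scale) weight $x_0-x$: from $C^\mu(x)=P^\mu(x)+\bar{x}_\mu-x$ one gets $(x_0-x)/(|x|+1)\to1$ as $x\to-\infty$, so it suffices to prove $\int_{a+}(x_0-x)\,m(dx)=\infty$. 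The condition $k=0$ means $\varphi_\lambda(-\infty)=0$, which is precisely the classical criterion for $-\infty$ to be a \emph{natural} (rather than entrance) boundary, and $\int_{a+}(x_0-x)\,m(dx)$ is exactly the associated Feller boundary integral; its divergence can either be quoted from Feller's boundary classification or proved directly by estimating over the successive spatial scales on which $\varphi_\lambda$ halves, using the integration-by-parts identity $\int_{(p,x_0]}(x_0-x)\,d\varphi_\lambda'=\varphi_\lambda(x_0)-\varphi_\lambda(p)-(x_0-p)\varphi_\lambda'(p)$ to control the numerator while $\varphi_\lambda(a+)=0$ supplies infinitely many scales. Packaging this summation so that the positive per-scale contributions are not annihilated by the $\infty-\infty$ cancellation that defeats a naive integration by parts is the main obstacle I anticipate.
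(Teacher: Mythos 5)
Your proof is correct. It has the same skeleton as the paper's --- the direction ``$1/W_\lambda>P^\mu(x_0)$ implies finiteness'' by bounding the denominator below by a positive constant, then a split on whether the endpoint is finite or infinite for the converse --- but the finite-endpoint case is handled by a genuinely different and more elementary argument, while the infinite-endpoint case is handled the same way. For a finite endpoint the paper integrates $\int (C^\mu)''(dx)/C^\mu(x)$ by parts, bounds the resulting squared term from below, and extracts divergence from $-(C^\mu)'(u-)/C^\mu(u)\to\infty$ via an iterated limit in $v$ then $u$; you instead use the convexity bound $P^\mu(x)\le(x-a)\,\mu([a,x))$ to reduce to the elementary divergence of $\int dg/g$ for a monotone $g$ vanishing at the endpoint, which is shorter and sidesteps the paper's slightly delicate limit interchange. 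Your diagnosis that the weight $|x|+1$ is inert at a finite endpoint but essential at an infinite one (power-law tails $\mu(dx)\sim|x|^{-p}dx$ give a convergent unweighted integral and a divergent weighted one) is exactly the structural reason the two cases must be separated, and the paper never articulates it. For the infinite endpoint your primary route --- quoting the classical equivalence $\varphi_\lambda(a+)=0\Leftrightarrow\int_{a+}(x_0-x)\,m(dx)=\infty$ from Feller's boundary classification --- is precisely what the paper does by citing Theorem~51.2 of Rogers--Williams, so the ``main obstacle'' you anticipate in the direct integration-by-parts alternative never needs to be overcome; that case is complete by citation. One shared loose end worth recording: when the endpoint is finite and $\mu$ charges it, the statement is sensitive to whether $\int_{a+}$ includes the atom (for $\mu=\tfrac12\delta_a+\tfrac12\delta_b$ with $x_0=\bar{x}_\mu$ the integral over $(a,x_0]$ vanishes even though $1/W_\lambda=P^\mu(x_0)$, whereas $m(\{a\})=\infty$); the paper's computation silently uses $(C^\mu)'(b-)=0$, i.e.\ no atom at the endpoint, so your explicit treatment of the atom case is in fact the more careful one.
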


\begin{proof}
We prove the second statement, the first follows similarly.

Clearly, if $1/W_\lambda > C^\mu(x_0)$, then $\int^{b-} \frac{(1+|x|) \mu(dx)}{C^{\mu}(x)-C^{\mu}(x_0)+1/W_\lambda} < \infty$. Conversely, suppose that $1/W_\lambda=C^{\mu}(x_0)$. Suppose first that $b=\infty$. Then $\lim_{x \uparrow \infty} \phi_\lambda(x) = \lim_{x \uparrow \infty} u_\lambda(x_0,x) = 0$, since $C^\mu(x) \downarrow 0$ as $x \uparrow \infty$. It follows by Theorem 51.2 in \cite{rogers} (which holds in the generalised diffusion case) that $\int^{\infty} x m(dx) = \infty$. Now suppose instead that $b< \infty$. Observe that $C^\mu(x)$ is a convex function on $I$, so it has left and right derivatives, while its second derivative can be interpreted as a measure; moreover, $C^{\mu}(b) = 0 = (C^\mu)'(b+)$ and $(C^{\mu})''(dx) = \mu(dx)$.

From the convexity and other properties of $C^{\mu}$, we note that $\frac{(x-b)(C^{\mu})'(x-)}{C^{\mu}(x)} \to 1$ as $x \nearrow b$, and also $\frac{C^{\mu}(x)}{x-b} \ge (C^{\mu})'(x-)$ for $x<b$, so $\frac{1}{x-b} \ge \frac{(C^{\mu})'(x-)}{C^{\mu}(x)}$ and $\frac{(C^{\mu})'(x-)}{C^{\mu}(x)} \searrow -\infty$ as $x \nearrow b$. The claim will follow provided we can show $\lim_{v \nearrow b} \int_u^v \frac{(C^\mu)''(dx)}{C^{\mu}(x)} = \infty$ for $u<b$. Using Fubini/integration by parts, and the fact that $(C^{\mu})'(x)$ is increasing and negative, we see that:
  \begin{align*}
    \int_u^{v-} \frac{(C^\mu)''(dx)}{C^{\mu}(x)} & = \frac{(C^{\mu})'(v-)}{C^{\mu}(v)} - \frac{(C^{\mu})'(u-)}{C^{\mu}(u)} + \int_u^v \left(\frac{(C^{\mu})'(x-)}{C^{\mu}(x)}\right)^2 \, dx\\
    & \ge \frac{(C^{\mu})'(v-)}{C^{\mu}(v)} - \frac{(C^{\mu})'(u-)}{C^{\mu}(u)} + (C^{\mu})'(v-)\int_u^v \frac{(C^{\mu})'(x-)}{C^{\mu}(x)^2} \, dx \\
  & \ge \frac{(C^{\mu})'(v-)}{C^{\mu}(v)} - \frac{(C^{\mu})'(u-)}{C^{\mu}(u)} + (C^{\mu})'(v-)\left(\frac{1}{C^{\mu}(u)} - \frac{1}{C^{\mu}(v)}\right) \\
  & \ge \frac{(C^{\mu})'(v-)- (C^{\mu})'(u-)}{C^{\mu}(u)}.
  \end{align*}
  Letting first $v \to b$, the right hand side is equal (in the limit) to $-\frac{(C^{\mu})'(u-)}{C^{\mu}(u)}$, but we observed above that this is unbounded as $u \nearrow b$, and since the whole expression is increasing in $u$, it must be infinite, as required.
\end{proof}

As Lemma \ref{l:boundary} demonstrates, the behaviour of $X$ at the
boundaries is determined by the value of $W_\lambda$. When $I$ is
unbounded, the boundary behaviour determines whether or not $X$ is a
martingale diffusion. Suppose that $a$ is finite, $b=\infty$. Then
Kotani \cite{Kotani} (see also Delbaen and Shirakawa
(\cite{delbaen2002})) show that $X_{t \wedge H_a \wedge H_b}$ is a
martingale if and only if $\int^{\infty-} x m(dx) = \infty$
(i.e. $\infty$ is not an entrance boundary). By Lemma \ref{l:boundary}
this is equivalent to the conditions $x_0 \leq \bar{x}_\mu$ and
$1/W_\lambda=C^\mu(x_0)$ being satisfied. An analogous observation
holds when $b$ is finite and $a$ is infinite.

Theorem \ref{t:main} and Lemma \ref{l:boundary} provide a natural way
of determining boundary properties by inspection of the decomposition
of the speed measure in terms of $\Prob(X_{T_{\lambda}} \in
dx)$. Furthermore, the decomposition gives us a canonical way of
constructing strict local martingales with a given law at a random
time. For instance if $b$ is infinite, we can generate strict local
martingale diffusions by choosing a measure $\mu$ and setting
$W_\lambda < 1/C^\mu(x_0)$.

\begin{example} \label{ex:Bessel}
Let $m(dx)=\frac{dx}{x^4}$ and $I = (0,\infty)$. Suppose $X_0=1$ and $\lambda=2$. Then $\phi(x)=\frac{x\sinh(\frac{1}{x})}{\sinh(1)}$ and $\varphi(x)=x e^{1-1/x}$ are respectively the strictly decreasing and strictly increasing eigenfunctions of the inverse Bessel process of dimension three, $X$, which solves the equation \[\frac{1}{2} \frac{d^2}{dm dx} f = 2 f.\]

We calculate (cf. Equations (3.2) and (3.3) in \cite{MK:12}),
\begin{equation}
\mu(dx)=\Prob(X_{T_{\lambda}} \in dx)= \left\{\begin{array}{ll}
\frac{\sinh(1)}{2 x^3} e^{-1/x}\, dx  &\; 0<x \leq 1,\\
\frac{e^{-1}}{2 x^3}\sinh(1)\, dx  &\; 1 \leq x.  
\end{array}\right.
\end{equation} 
We find $\bar{x}_\mu=1-1/e$. Further, we calculate $P^\mu(x)=\frac{1}{2}\sinh(1) x e^{-1/x}$ for $x \leq 1$ and
$C^\mu(x)=e^{-1}( x \sinh(1/x)-1)$ for $x \geq 1$. Thus

\begin{equation} 
m(dx)=dx/x^4= \left\{\begin{array}{ll}
\frac{\mu(dx)}{P^\mu(x)}  &\; 0<x \leq 1,\\
\frac{\mu(dx)}{C^\mu(x)-(2C^\mu(1)+2P^\mu(1))}  &\; 1 \leq x. 
\end{array}\right.
\end{equation}
It follows that $X$ is a strict local-martingale diffusion. 
\end{example}

In the example above, the strict local martingale property of the process follows from the fact that $C^\mu$ is `shifted up' in the decomposition of the speed measure
(\ref{eq:speeddecomp}). In general, we  expect reflection at the left boundary $X = a$, when we have had to `shift the function $P^\mu$ up', and we expect the process stopped at $t=H_a \wedge H_b$ to be a strict local martingale when we `shift $C^{\mu}$ up'.

\begin{example} \label{ex:Bessel2}
Let us reconsider the diffusion in Example \ref{ex:Bessel}. Let us construct a diffusion $Y$ with the same law as the inverse Bessel process of dimension three at an exponential time, such that $Y_{t \wedge H_0}$ is a martingale (and therefore, such that $Y_0 = \bar{x}_\mu$). As we are shifting the starting point, we must adjust the speed measure between $\bar{x}_\mu$ and the starting point of the diffusion in 
Example \ref{ex:Bessel}.  By Lemma \ref{l:boundary} we know that the speed measure of the martingale diffusion must satisfy
\begin{equation*} 
m(dx)= \left\{\begin{array}{ll}
\frac{\mu(dx)}{P^\mu(x)} &\; 0<x \leq \bar{x}_\mu,\\
\frac{\mu(dx)}{C^\mu(x)} &\; \bar{x}_\mu \leq x, 
\end{array}\right.
\end{equation*}
Recall that $\bar{x}_\mu=1-1/e$. We calculate
\begin{equation*} 
m(dx) = \left\{\begin{array}{ll}
\frac{1}{x^4} \, dx &\; 0<x \leq \bar{x}_\mu,\\
\frac{(e-e^{-1})e^{-1/x}\, dx}{x^3((e-e^{-1}) x e^{-1/x}+2(\bar{x}_\mu-x))}  &\; \bar{x}_\mu<x \leq 1,\\
\frac{\sinh(1/x)\, dx}{x^3(x\sinh(1/x)-1)}  &\; 1 \leq x.
\end{array}\right.
\end{equation*}
\end{example}

\section{The BLJ embedding for Brownian motion}
The  BLJ construction is remarkably general and can be used to embed distributions in general Hunt processes. Our interest, however, lies in the specific case of embedding a law $\mu$ in Brownian Motion. In this setting, the rich structure of the embedding translates into a one-to-one correspondence to the family of diffusions with a given law at an exponential time. 

Suppose that $x_0 \in [a,b]$, $\mu(\{x_0\}) = 0$ and define 
\begin{equation} \label{eq:Vmu}
V^{x_0}_\mu(x)=\int \E_y[L_{H_{x_0}}^x] \mu(dy).
\end{equation}

We will assume from now on that $\mu$ has a finite first moment and that 
$\mu(\{x_0\})=0$. Then $\kappa_0=\sup\{V_\mu(x) ; x \in [a,b]\} < \infty$ (see, for instance, p.~547 in \cite{BLJ}). By the Corollary on p.~540 in \cite{BLJ} it follows that for each $\kappa \geq \kappa_0$ the stopping time 
\begin{equation} \label{eq:BLJstop}
\tau_\kappa=\inf \left\{t>0: \kappa \int \frac{L_t^x \mu(dx)}{\kappa-V^{x_0}_\mu(x)} > L_t^{x_0} \right\}
\end{equation}
embeds $\mu$ in $B^{x_0}$, i.e. $B^{x_0}_{\tau_{\kappa}} \sim \mu$. Moreover, we have $\E_{x_0}[L^{x_0}_{\tau_{\kappa}}]=\kappa$. Finally, the stopping time $\tau_{\kappa_0}$ is optimal in the following sense; if $\sigma$ is another embedding of $\mu$ in $B_{x_0}$, then for every $x \in I$, $\E_{x_0}[L^x_\sigma] \geq \E_{x_0}[L^x_{\tau_{\kappa_0}}]$.  From Lemma~\ref{lem:minimal}, it follows that the stopping time $\tau_{\kappa_0}$ is minimal in the sense of Definition~\ref{def:minimal}.

\section{Embeddings and diffusions}
We will now show that each BLJ embedding $\tau_{\kappa}$ of $\mu$ in
$B$ corresponds to a local-martingale diffusion $X$ such that
$X_{T_{\lambda}} \sim \mu$, for an independent exponentially
distributed random variable $T_{\lambda}$. The correspondence is a
consequence of the role played by the functional $\Gamma_u=\int_{\R}
L_u^x m(dx)$, where $m$ is defined as in (\ref{eq:speeddecomp}) for a
fixed starting point $x_0$ and a target law $\mu$ on $(a,b)$. We have
already seen that $B_{A_{T_\lambda}} \sim \mu$, where
$A_t=\Gamma_t^{-1}$. We will now observe that for $\kappa \geq
\kappa_0$, the BLJ embedding $\tau_\kappa$ introduced above can be
re-written as $\tau_\kappa=\inf\{t \geq 0 : \Gamma_t \lambda \kappa >
L_t^{x_0}\}$. The remarkable role played by the additive functional
$\Gamma$ in both settings, and the resulting correspondence between
embeddings and diffusions, is the subject of this section. In order to
better understand the connection, we first make the following
distinction: let $\tau_{\kappa}^* = \inf\{t \ge 0: \Gamma_t >
T_{\lambda}\}$, where $T_{\lambda}$ is an independent exponential
random variable with mean $1/\lambda$, and $m(dx)$ is given by
\eqref{eq:speeddecomp}, with $\kappa=2/W_{\lambda}$. It follows that
$B_{\tau^*_\kappa} \sim \mu$ for $\kappa \ge
2\max\{C^{\mu}(x_0),P^\mu(x_0)\}$, and (from
Corollary~\ref{c:lpotential}) we have $\E_{x_0}[L^{x_0}_{T_{\lambda}}]
= \kappa$.

The connection between the quantities that define BLJ embeddings and
those that define diffusions follows from relating the
$\lambda$-potential of diffusions, the potential $U^\mu$ of $\mu$ and
$V_\mu^{x_0}$. A number of solutions to the Skorokhod Embedding
Problem, most notably the solutions of Chacon-Walsh
\cite{ChaconWalsh:76}, Az{\'e}ma-Yor (\cite{AzemaYor:79a},
\cite{AzemaYor:79b}) and Perkins \cite{Perkins:86} can be derived
directly from quantities related to the potential $U^\mu$.

\begin{lemma} \label{l:VPo}
\begin{equation} \label{eq:VPo}
V^{x_0}_\mu(x)= \left\{\begin{array}{ll}
2(P^\mu(x_0)-P^\mu(x))  &\; x \leq x_0 \\
2(C^\mu(x_0)-C^\mu(x))  &\; x > x_0.  
\end{array}\right. 
\end{equation}
\end{lemma}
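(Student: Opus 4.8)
The plan is to reduce the statement to a single computation in Brownian motion, namely an explicit formula for the expected local time $\E_y[L^x_{H_{x_0}}]$ at a level $x$ accrued before the first hitting time of $x_0$, when started from $y$. Once this is known, $V^{x_0}_\mu(x)$ is obtained simply by integrating against $\mu$ and matching the result with the definitions of $P^\mu$ and $C^\mu$.

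First I would establish the key formula
\[
\E_y[L^x_{H_{x_0}}]=\begin{cases} 2(\min(x,y)-x_0), & x,y>x_0,\\ 2(x_0-\max(x,y)), & x,y<x_0,\\ 0, & \text{$x,y$ on opposite sides of $x_0$.}\end{cases}
\]
The opposite-sides case is immediate: by path-continuity, any trajectory from $y$ reaching a level $x$ on the far side of $x_0$ must first pass through $x_0$, at which point it is stopped, so no local time at $x$ is accrued. For the same-side case (say $x,y>x_0$) I would use Tanaka's formula, which gives that $|B_t-x|-L^x_t$ is a local martingale started at $|y-x|$. Rather than stopping at $H_{x_0}$ directly --- which fails, since the stopped local martingale is not uniformly integrable --- I would localise on a bounded interval by setting $\sigma_\beta=H_{x_0}\wedge H_\beta$ for $\beta>\max(x,y)$. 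On $(x_0,\beta)$ the quantity $|B_{t\wedge\sigma_\beta}-x|$ is bounded and $\E_y[L^x_{\sigma_\beta}]$ is finite, so $|B_{t\wedge\sigma_\beta}-x|-L^x_{t\wedge\sigma_\beta}$ is a genuine uniformly integrable martingale and optional stopping applies. Evaluating $\E_y[|B_{\sigma_\beta}-x|]$ via the gambler's-ruin probability $\Prob_y(H_\beta<H_{x_0})=(y-x_0)/(\beta-x_0)$ and then letting $\beta\to\infty$ (with monotone convergence $L^x_{\sigma_\beta}\uparrow L^x_{H_{x_0}}$) yields $2(\min(x,y)-x_0)$; the case $x,y<x_0$ is symmetric.

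With the formula in hand, I would integrate against $\mu$. For $x\le x_0$, only the values $y\le x_0$ contribute (the rest vanish), and splitting the integral at $y=x$ gives
\[
V^{x_0}_\mu(x)=2(x_0-x)\,\mu((-\infty,x])+2\int_{(x,x_0)}(x_0-y)\,\mu(dy).
\]
I would then recognise this, using $P^\mu(z)=\int(z-y)^+\mu(dy)$ and comparing the integrands $(x_0-y)^+-(x-y)^+$ termwise in $y$, as exactly $2(P^\mu(x_0)-P^\mu(x))$; the regime $x>x_0$ is handled identically with $P^\mu$ replaced by $C^\mu$. The standing hypothesis $\mu(\{x_0\})=0$ ensures the boundary value $y=x_0$ introduces no ambiguity.

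The main obstacle is the expected-local-time computation, and specifically the fact that one cannot apply optional stopping to $|B_t-x|-L^x_t$ directly at $H_{x_0}$: the spurious identity $\E_y[L^x_{H_{x_0}}]=|x_0-x|-|y-x|$ it would produce is false, since it can be negative. The sidedness structure --- vanishing of the local time when $x$ and $y$ lie on opposite sides of $x_0$, together with the bounded-interval localisation on the correct side --- is precisely what repairs this, and carrying out the limit $\beta\to\infty$ correctly (hitting probabilities combined with monotone convergence for the local time) is where the real work lies. Everything after that is bookkeeping against the definitions of $P^\mu$ and $C^\mu$.
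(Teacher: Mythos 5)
Your proof is correct and follows essentially the same route as the paper: both reduce the lemma to the potential kernel $\E_y[L^x_{H_{x_0}}]$ and then integrate against $\mu$, matching the result with $P^\mu$ and $C^\mu$. The only difference is that the paper simply quotes the closed form $\E_y[L_{H_{x_0}}^x]=|y-x_0|+|x_0-x|-|y-x|$ (which agrees case by case with your formula), whereas you derive it from scratch via Tanaka's formula, localisation on $H_{x_0}\wedge H_\beta$ and the gambler's-ruin computation; that derivation is sound.
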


\begin{proof}
Observe that $\E_y[L_{H_{x_0}}^x]=|y-x_0|+|x_0-x|-|y-x|$ is simply the potential kernel for $B^{x_0}$ killed at $x_0$. Integrating we find 
\[V^{x_0}_\mu(x)=\int(|y-x_0|+|x_0-x|-|y-x|)\mu(dy)=U^\mu(x_0)-U^\mu(x)+|x_0-x|.\]
The result follows after some re-arrangement.
\end{proof}

Recalling the definition of $\kappa_0$ in the BLJ embedding, we now observe that $\kappa_0=\max\{2C^\mu(x_0),2P^\mu(x_0)\}$, which is the smallest constant $c$ such that $c-V^{x_0}_\mu(x) \geq 0$ for all $x$.  The situation is illustrated in Figure 1 below.

\begin{figure}[htb]
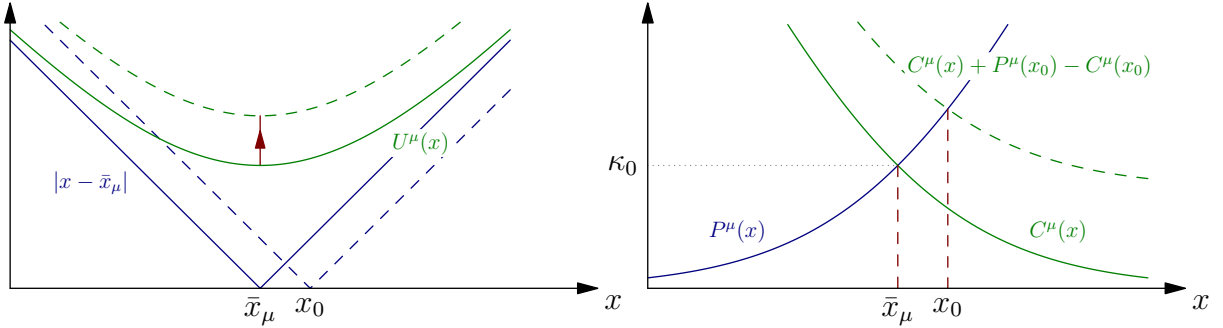

  \centering
  \begin{asy}[width=\textwidth]
    
    import graph;
    
    real xmin = -1;
    real xmax = 1;
    real ymax = 1;

    real u0(real x) {return abs(x);}

    real scale = 1.4;

    real u1(real x) {return (log(2*cosh(scale*x))/scale);}

    // real u0s(real x, real s) {return u0(x-s)

    real skip = 0.35;
    real eps = 0.15;
    real shrink = 0.7;
    
    real yfmax = ymax+eps/2;

    pen q = black+0.5;

    real shift = 0.2;
    real x2 = skip + xmax-xmin+shift;

    real T(real x) {return x-shift;}
    
    real u0s(real x) {return u0(T(x));}
    real u1s(real x) {return u1(x)+shift;}

    bool3 u0smax(real x) { return (u0s(x)<=yfmax);}
    bool3 u1smax(real x) { return (u1s(x)<=yfmax);}

    draw(graph(u0s,xmin,xmax,u0smax),deepblue+0.5+dashed);
    draw(graph(u1s,xmin,xmax,u1smax),deepgreen+0.5+dashed);

    draw((0,u1(0))--(0,u1s(0)),deepred+0.5,MidArrow);

    draw(graph(u0,xmin,xmax),deepblue+0.5);
    label(scale(shrink)*"$|x-{\mmu}|$",(-0.5,u0(-0.5)),SW,deepblue+0.5);
    draw(graph(u1,xmin,xmax),deepgreen+0.5);
    label(scale(shrink)*"$U^{\mu}(x)$",(0.5,u1(0.5)),SE,deepgreen+0.5,filltype=UnFill);

    label("$\mmu$",(0,0),S);
    label("$x_0$",(shift,0),S);

    real Pm(real x) {return u1(x) + x;}
    real Cm(real x) {return u1(x) -x;}

    real Pms(real x) {return Pm(x - x2);}
    real Cms(real x) {return Cm(x - x2);}

    real Cms2(real x) {return Cm(x - x2) + Pm(shift)-Cm(shift);}

    bool3 Pmax(real x) { return (Pms(x)<=yfmax);}
    bool3 Cmax(real x) { return (Cms(x)<=yfmax);}
    bool3 C2max(real x) { return (Cms2(x)<=yfmax);}

    draw(graph(Pms,xmin+x2,xmax+x2,Pmax),deepblue+0.5);
    label(scale(shrink)*"$P^{\mu}(x)$",(x2-0.5,Pms(x2-0.5)),NW,deepblue+0.5);
    draw(graph(Cms,xmin+x2,xmax+x2,Cmax),deepgreen+0.5);
    label(scale(shrink)*"$C^{\mu}(x)$",(x2+0.5,Cms(x2+0.5)),NE,deepgreen+0.5);
    draw(graph(Cms2,xmin+x2,xmax+x2,C2max),deepgreen+0.5+dashed);
    label(scale(shrink)*"$C^{\mu}(x)+P^{\mu}(x_0)-C^{\mu}(x_0)$",(x2,Cms2(x2)),E,deepgreen+0.5,filltype=UnFill);
    
    draw((x2+shift,0)--(x2+shift,Pm(shift)),deepred+0.5+dashed);
    draw((x2,0)--(x2,Pm(0)),deepred+0.5+dashed);
    draw((x2+xmin,Pm(0))--(x2,Pm(0)),gray+0.5+dotted);

    label("$\mmu$",(x2,0),S);
    label("$x_0$",(shift+x2,0),S);
    label("$\kappa_0$",(xmin+x2,Pm(0)),W);

    draw ((xmin,0)--(xmax+shift+eps,0),q,Arrow);
    draw ((xmin,0)--(xmin,ymax+eps),q,Arrow);

    draw ((xmin+x2,0)--(xmax+eps+x2,0),q,Arrow);
    draw ((xmin+x2,0)--(xmin+x2,ymax+eps),q,Arrow);

    label("$x$",(xmax+shift+eps,0),SE);
    label("$x$",(xmax+eps+x2,0),SE);

  \end{asy}
  \caption{If $x_0=\bar{x}_\mu$, then
    $\kappa_0-V_\mu^{{x_0}}(x)=U^\mu(x)-|x-\bar{x}_\mu|$. If $x_0 \neq
    \bar{x}_\mu$, then the potential must be shifted upwards to lie
    above $|x_0-x|$ everywhere. Thus, if $x_0 < \bar{x}_\mu$,
    $\kappa_0-V_\mu^{x_0}(x)=U^\mu(x)+(2C^\mu(x_0)-U^\mu(x_0))-|x_0-x|$,
    while if $x_0>\bar{x}_\mu$,
    $\kappa_0-V_\mu^{x_0}(x)=U^\mu(x)+(2P^\mu(x_0)-U^\mu(x_0))-|x_0-x|$
    (see left picture). Note that if $x \geq x_0>\bar{x}_{\mu}$,
    $\frac{1}{2}(\kappa_0-V_\mu^{x_0}(x))=P^\mu(x_0)-C^\mu(x_0)+C^\mu(x)$
    (see right figure).}
\end{figure}


The following Lemma follows from the Corollary in \cite{BLJ}, page~540.
\begin{lemma} \label{l:lambda}
\begin{equation} \label{eq:lambda}
\E_{x_0}[L^{x_0}_{\tau_{\kappa_0}}]= \left\{\begin{array}{ll}
2C^\mu(x_0)  &\; x_0 \leq \bar{x}_\mu\\
2P^\mu(x_0)  &\; x_0 \geq \bar{x}_\mu.  
\end{array}\right.
\end{equation} 
\end{lemma}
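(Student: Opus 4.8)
The plan is to combine the two facts already assembled about the BLJ embedding with the explicit form of $V^{x_0}_\mu$ from Lemma~\ref{l:VPo}. Recall from the Corollary on p.~540 of \cite{BLJ}, quoted in Section~4, that for every admissible $\kappa$ the embedding $\tau_\kappa$ satisfies $\E_{x_0}[L^{x_0}_{\tau_\kappa}] = \kappa$; specialising to $\kappa = \kappa_0$ reduces the claim to evaluating the constant $\kappa_0 = \sup\{V^{x_0}_\mu(x) : x \in [a,b]\}$ and identifying it with the right-hand side of \eqref{eq:lambda}.

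To compute the supremum I would substitute the two branches of \eqref{eq:VPo}. On $\{x \le x_0\}$ we have $V^{x_0}_\mu(x) = 2(P^\mu(x_0) - P^\mu(x))$, and since $P^\mu(x) = \int (x-y)^+ \mu(dy)$ is non-negative and non-decreasing with $P^\mu(x) \to 0$ as $x \searrow a$, the supremum over this branch equals $2P^\mu(x_0)$. Symmetrically, on $\{x > x_0\}$ we have $V^{x_0}_\mu(x) = 2(C^\mu(x_0) - C^\mu(x))$, and since $C^\mu(x) = \int (y-x)^+ \mu(dy)$ is non-negative and non-increasing with $C^\mu(x) \to 0$ as $x \nearrow b$, the supremum over this branch equals $2C^\mu(x_0)$. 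Taking the larger of the two yields $\kappa_0 = \max\{2P^\mu(x_0), 2C^\mu(x_0)\}$, exactly as recorded in the paragraph preceding the statement.

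It then remains to decide which of the two competing values is the maximum, and here I would invoke the elementary identity $C^\mu(x_0) - P^\mu(x_0) = \int (y - x_0)\,\mu(dy) = \bar{x}_\mu - x_0$, valid because $\mu$ is a probability measure. Hence $C^\mu(x_0) \ge P^\mu(x_0)$ precisely when $x_0 \le \bar{x}_\mu$, and $C^\mu(x_0) \le P^\mu(x_0)$ precisely when $x_0 \ge \bar{x}_\mu$; substituting into $\kappa_0 = \max\{2P^\mu(x_0), 2C^\mu(x_0)\}$ gives \eqref{eq:lambda}. The argument is essentially a bookkeeping exercise, so I do not anticipate a genuine obstacle; the only point requiring a little care is the behaviour at the boundaries---confirming that $P^\mu$ and $C^\mu$ vanish in the limits $x \to a$ and $x \to b$ respectively (by monotone or dominated convergence, irrespective of whether $a$ and $b$ are finite), so that the suprema on each branch are approached in the limit rather than attained in the interior.
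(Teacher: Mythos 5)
Your proof is correct and follows essentially the same route as the paper's: both invoke the BLJ Corollary to identify $\E_{x_0}[L^{x_0}_{\tau_{\kappa_0}}]$ with $\kappa_0$ and then use $\kappa_0=\max\{2C^\mu(x_0),2P^\mu(x_0)\}$ together with the relation between the sign of $C^\mu(x_0)-P^\mu(x_0)$ and that of $\bar{x}_\mu-x_0$. You merely spell out the details (the branchwise supremum of $V^{x_0}_\mu$ and the put--call parity identity) that the paper takes as already observed in the surrounding text.
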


\begin{proof}
By the Corollary in \cite{BLJ}, $\kappa_0=\E_{x_0}[L^{x_0}_{\tau_{\kappa_0}}]$. 
The result now follows from the fact that $\kappa_0=\max\{2C^\mu(x_0),2P^\mu(x_0)\}$.
\end{proof}

In comparison, the Corollary on p. 540 in \cite{BLJ} implies the formula $\E_{x_0}[L^x_{\tau_{\kappa_0}}]=\kappa_0-V^{x_0}_\mu(x)$. It is helpful to 
visualise this relationship pictorially, see Fig. 1. The expected local time at $x$ of Brownian motion started at $x_0$ until the minimal BLJ stopping time is the distance at $x$ between the minimally shifted potential function and the potential kernel.

So far we have focused on the minimal embedding $\tau_{\kappa_0}$ for $\mu$.  The picture for the suboptimal embeddings $\tau_{\kappa}$, $\kappa>\kappa_0$ 
is analogous. The picture simply reflects the additional expected local time accrued between the minimal and the non-minimal stopping time. 
The expected local time of a non-minimal embedding is the 
expression in (\ref{eq:lambda}) plus the positive constant $\kappa-\kappa_0$, i.e. $\E_{x_0}[L^x_{\tau_{\kappa}}]=\E_{x_0}[L^x_{\tau_{\kappa_0}}]+(\kappa-\kappa_0)$. 
The graph of the expected local time (see Fig. 1) is given by
shifting the potential picture $\kappa_0-V^{x_0}_\mu(x)$ upwards by
$\kappa-\kappa_0$. 

It follows from these observations that the local times
$\E_{x_0}[L^x_{\tau_{\kappa}}]$ and $\E_{x_0}[L^x_{\tau^*_{\kappa}}]$
are equal for all $\kappa \ge \kappa_0$ and all $x\in [a,b]$. Let us
now make the correspondence between diffusions (through the stopping
times $\tau_\kappa^*$) and the BLJ embeddings precise.  Fix $x_0 \in
\supp(\mu)$ and let $\kappa \geq \max\{2C^\mu(x_0), 2P^\mu(x_0)\}$.

\begin{proposition} \label{p:correspondence} Fix $\lambda>0$ and
  define a Borel measure $m$ via
  $m(dx)=\frac{1}{\lambda}\frac{\mu(dx)}{\kappa-V_\mu^{x_0}(x)}$.
  Define the functional $\Gamma_u=\int_\R L_u^x m(dx)$. Then:
  \begin{enumerate}
  \item The stopping time $\tau_\kappa=\inf\{u \geq 0: \lambda \kappa
    \Gamma_u > L^{x_0}_u\}$ has $B_{\tau_\kappa} \sim \mu$ and
    $\E_{x_0}[{L_{\tau_\kappa}^x}]=\kappa-V_\mu^{x_0}(x)$.
  \item The stopping time $\tau_\kappa^*=\inf\{u \geq 0: \Gamma_u >
    T_{\lambda}\}$ has $B_{\tau_\kappa^*} \sim \mu$ and
    $\E_{x_0}[{L_{\tau_\kappa}^x}]=\kappa-V_\mu^{x_0}(x)$.
  \item The diffusion with speed measure $m$, defined via
    $X_t=B^{x_0}_{A_t}$, where $A$ is the right-continuous inverse of
    $\Gamma_u$, has Wronskian $W_\lambda=\frac{2}{\kappa}$,
    $\lambda$-potential $u_\lambda(x_0,y)=\kappa-V^{x_0}_\mu(y)$ and
    $X_{T_\lambda} = B_{\tau^*} \sim \mu$.
  \end{enumerate}
\end{proposition}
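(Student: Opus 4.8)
The plan is to reduce all three parts to a single algebraic bridge, namely that the measure $m$ in the statement is precisely the speed measure of Theorem~\ref{t:main} for the choice $W_\lambda = 2/\kappa$. First I would substitute the explicit form of $V^{x_0}_\mu$ from Lemma~\ref{l:VPo} into $m(dx) = \frac{1}{\lambda}\frac{\mu(dx)}{\kappa - V^{x_0}_\mu(x)}$: for $x \le x_0$ the denominator becomes $\kappa - 2(P^\mu(x_0) - P^\mu(x)) = 2\big(P^\mu(x) - P^\mu(x_0) + \kappa/2\big)$, and symmetrically $2\big(C^\mu(x) - C^\mu(x_0) + \kappa/2\big)$ for $x \ge x_0$. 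Hence $m$ coincides term-by-term with \eqref{eq:speeddecomp} exactly when $1/W_\lambda = \kappa/2$. I would record this identification as the first step, since every part flows from it.

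For part (3), once $m$ is recognised as the speed measure of Theorem~\ref{t:main} at $W_\lambda = 2/\kappa$, that theorem yields $X_{T_\lambda} \sim \mu$, and Corollary~\ref{c:lpotential} rewritten through Lemma~\ref{l:VPo} gives, for $y \le x_0$, $u_\lambda(x_0, y) = 2(P^\mu(y) - P^\mu(x_0)) + \kappa = \kappa - V^{x_0}_\mu(y)$, and the mirror identity for $y \ge x_0$. The delicate point is to confirm that the diffusion generated by $m$ genuinely has Wronskian $2/\kappa$, as opposed to merely having a speed measure of the correct shape for that value; I address this in the final paragraph.

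Part (1) is then a direct identification with the BLJ embedding. Since $\Gamma_u = \int_\R L_u^x\, m(dx) = \frac{1}{\lambda}\int \frac{L_u^x\, \mu(dx)}{\kappa - V^{x_0}_\mu(x)}$, we have $\lambda\kappa\,\Gamma_u = \kappa\int \frac{L_u^x\, \mu(dx)}{\kappa - V^{x_0}_\mu(x)}$, so the event defining $\tau_\kappa$ here is identical to the one defining the BLJ stopping time \eqref{eq:BLJstop}. As $\kappa \ge \kappa_0 = \max\{2C^\mu(x_0), 2P^\mu(x_0)\}$, the results recalled in Section~4 apply verbatim: $B_{\tau_\kappa} \sim \mu$, and $\E_{x_0}[L^x_{\tau_\kappa}] = \kappa - V^{x_0}_\mu(x)$, combining $\E_{x_0}[L^x_{\tau_{\kappa_0}}] = \kappa_0 - V^{x_0}_\mu(x)$ with $\E_{x_0}[L^x_{\tau_\kappa}] = \E_{x_0}[L^x_{\tau_{\kappa_0}}] + (\kappa - \kappa_0)$. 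Part (2) links the diffusion clock to $T_\lambda$: writing $A$ for the right-continuous inverse of $\Gamma$, we have $\tau^*_\kappa = A_{T_\lambda}$, so $B_{\tau^*_\kappa} = B_{A_{T_\lambda}} = X_{T_\lambda} \sim \mu$ by part (3). For the local time I would use the resolvent interpretation recalled before Corollary~\ref{c:lpotential}: integrating $e^{-\lambda t}\,d_t L^x_{A_t}$ against the exponential law of $T_\lambda$ gives $\E_{x_0}[L^x_{\tau^*_\kappa}] = \E_{x_0}[L^x_{A_{T_\lambda}}] = u_\lambda(x_0, x)$, which equals $\kappa - V^{x_0}_\mu(x)$ by part (3).

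I expect the main obstacle to be the Wronskian bookkeeping in part (3): Theorem~\ref{t:main} is phrased for a diffusion with a \emph{given} Wronskian, whereas here $W_\lambda$ is itself determined by $m$, so one must verify internal consistency rather than assume it. I would pin this down at the diagonal: the general identity $u_\lambda(x_0, y) = \frac{2}{W_\lambda}\E_y[e^{-\lambda H_{x_0}}]$ gives $u_\lambda(x_0, x_0) = 2/W_\lambda$ at $y = x_0$, and matching this against $\kappa - V^{x_0}_\mu(x_0) = \kappa$ forces $W_\lambda = 2/\kappa$. Everything else reduces to the substitution of Lemma~\ref{l:VPo} or a direct appeal to Theorem~\ref{t:main} and the BLJ corollary.
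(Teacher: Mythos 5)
Your proposal is correct and follows essentially the same route as the paper: substitute Lemma~\ref{l:VPo} to identify $m$ with the speed measure \eqref{eq:speeddecomp} at $1/W_\lambda=\kappa/2$, invoke the Bertoin--Le Jan Corollary for part (1), and Theorem~\ref{t:main} together with Corollary~\ref{c:lpotential} for parts (2) and (3). The one point worth flagging is your Wronskian check: matching $u_\lambda(x_0,x_0)=2/W_\lambda$ against $\kappa-V^{x_0}_\mu(x_0)=\kappa$ presupposes the potential formula you are trying to establish (which itself carries the factor $2/W_\lambda$), so it is mildly circular --- a cleaner verification, going slightly beyond the paper's terse assertion, is to observe that $x\mapsto(\kappa-V^{x_0}_\mu(x))/\kappa$ solves \eqref{eq:differentialsc} for the measure $m$ on either side of $x_0$ and to compute $W_\lambda=\varphi_\lambda'(x_0-)-\phi_\lambda'(x_0+)=\tfrac{2}{\kappa}\bigl((P^\mu)'(x_0-)-(C^\mu)'(x_0+)\bigr)=2/\kappa$ directly.
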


\begin{proof}
  Since $\mu$ has a finite first moment, $\kappa_0 < \infty$ and
  Hypothesis $1$ in \cite{BLJ} is satisfied. Thus by the Corollary in
  \cite{BLJ}, $\tau_\kappa=\inf \left\{u>0: \kappa \int \frac{L_u^x
      \mu(dx)}{\kappa-V^{x_0}_\mu(x)} > L_u^{x_0} \right\} =\inf\{u
  \geq 0 : \lambda \kappa \Gamma_u > L^{x_0}_u\}$ embeds $\mu$ in
  $B^{x_0}$.  It follows from Lemma \ref{l:VPo} and Corollary
  \ref{c:lpotential} that $u_\lambda(x_0,x)=\kappa-V_\mu^{x_0}(x)$ and
  that
  $\E_{x_0}[L^{x_0}_{A_{T_\lambda}}]=\frac{2}{W_\lambda}=\kappa$. By
  Theorem \ref{t:main}, $X_{T_\lambda} \sim \mu$.
\end{proof}

\begin{remark}
The assumption $\mu(\{x_0\})=0$ made in Section~4 is necessary in the construction of the BLJ embedding in \cite{BLJ}, but not in the construction of diffusions with a given exponential time law in \cite{MK:12}. To make the correspondence independent of this assumption on the target law, a modified version of the BLJ embedding with external randomisation can be constructed.
\end{remark}

With the close connection indicated by
Proposition~\ref{p:correspondence}, it is natural to consider if we
can more explicitly explain the connection between $\tau_\kappa^*$ and
$\tau_\kappa$. To do this, we consider the excursion theory that is
behind the correspondence between diffusions and embeddings. We refer
to Rogers~\cite{rogers1989guided} for background on excursion
theory. Let $U:=\{f \in C(\R^+) : f^{-1}(\R \ \{0\}) = (0, \zeta),$
for some $\zeta >0\}$, be the set of excursions of $B$. Then, by a
well-known result due to It{\^o}, there exists a measure $\Xi$ on $u$
such that for any $A \subseteq U$,
\[\Xi(A):= \# \{\mbox{Excursions of} \ B_t \text{ in } A \ \mbox{between} \ L_t=l \ \mbox{and} \ L_t=r \} \sim \text{Poisson}((r-l)n(A)).\] 

Consider the Brownian motion $B_t$, and $\Gamma_t=\int_\R L_u^x m(dx)$, which is increasing. For a test function $g$, define: $\phi(u):=\int n(df) g(f(\Gamma_t^{-1}(u)) \Indi_{\{\Gamma_t(\zeta(f)) \geq u\}}$, the expected value of $g(B_{\Gamma_u^{-1}})$ averaged over the excursion measure, conditioned on the event that $\Gamma_s \geq u$ on the excursion.

If ${T_\lambda}$ is an independent exponential random variable with parameter $\lambda$ then we compute, using the fact that the process $B$ is recurrent, and the memoryless property of the exponential, that
\begin{equation} \label{eq:firstexcursion}
\E[g(B_{\tau_\kappa^*})] = \E[g(B_{\Gamma_{T_\lambda}^{-1}})]=c \int \phi(u) \lambda e^{-\lambda u} du,
\end{equation}
for some constant $c$.

In the BLJ setting, consider the stopping time $\tau_\kappa=\inf\{u
\geq 0 : \Gamma_u \lambda \kappa > L_t^{x_0}\}$. Observe that
$\Gamma_{L_s^{-1}}$ is an increasing process with independent,
identically distributed increments, and hence $Y_s:=s-\lambda \kappa
\Gamma_{L_s^{-1}}$ is a L{\'e}vy process which goes below $0$ on the
excursion for which $\tau_\kappa$ occurs. In particular, we have
\begin{equation} \label{eq:BLJexcursion}
\E[g(B_{\tau_\kappa})]=\E\left[\int_0^{\tau_\kappa} \phi(L_t^{x_0} - \lambda \kappa \Gamma_t) dL_t^{x_0}\right] = \E\left[\int_0^{L^{-1}_{\tau_\kappa}} \phi(Y_s) ds \right],
\end{equation}
where $L^{-1}_{\tau_\kappa} = \inf\{t \geq 0 : Y_t \leq 0\}$ and the first
equality follows from the Maisonneuve exit system, see \cite{BLJ}, p. 544. 

In addition we observe that $Y_t$ is a L{\'e}vy process and so, using
Lemma 1 in \cite{BLJ}, we can rewrite this formula as
\[\E\left[\int_0^{L^{-1}_{\tau_\kappa}} \phi(Y_s) ds \right] = \int_0^\infty \phi(u) \exp(-\psi^{-1}(0) u)du,\]
where $\psi(x)=\frac{1}{t} \log(\E^{0}[\exp(xY_t)])=x-\log(\E[\exp(-\lambda \kappa \Gamma_{L^{-1}_1})])$.

The behaviour of $\psi$ is determined by properties of $Y_t$. In
particular, it is convex and $\psi^{-1}(0)=\inf\{t : \psi(t) >0\} >0$
if and only if $\psi' (0) <0$ which is true if and only if
$\E[Y_1]<0$, i.e.  $\E^{x_0}[\lambda \kappa \Gamma_t] >
\E^{x_0}[L_t^{x_0}]$. It follows that $\E[Y_1] \leq 0$, since
otherwise with positive probability, $Y_t \nearrow \infty$ and hence
$L_{\tau}^{-1} = \infty$ with positive probability, and hence the same
is true for the BLJ stopping time.

Equating the terms in (\ref{eq:BLJexcursion}) and (\ref{eq:firstexcursion}), we now have an identity involving only $\phi$ and we can choose $\kappa$ so that $\psi^{-1}(0)=\lambda$. Note that the constant $c$ appearing in 
(\ref{eq:firstexcursion}) is equal to $\frac{1}{\lambda}$. Moreover, by the embedding property of $\tau_\kappa$ and the construction of $\Gamma$, both equations are equal to $\int g(x) \mu(dx)$. 

\section{Minimal diffusions}
The purpose of this final section is to define a notion of minimality for diffusions which will be analogous to the notion of minimality for the BLJ Skorokhod embeddings.

Suppose we have fixed a starting point  $x_0$ and a law $\mu$ and we are faced with a (Skorokhod embedding) problem of finding a stopping time $\tau$, such that $B_\tau \sim \mu$, where $B$ is a standard Brownian motion started at $x_0$. Then (and especially when we are interested in questions of optimality) it is most natural to search for stopping times $\tau$ which are minimal.

In the family of BLJ embeddings, the embedding $\tau_{\kappa_0}$ is
minimal, and hence (by Lemma~\ref{lem:minimal}) so too is
$\tau_{\kappa_0}^*$, and the notion of minimal embedding carries over
into a notion of minimality for diffusions in natural scale.

\begin{definition} \label{d:minimality} We say that a time homogeneous diffusion $X$ in natural scale and started at $x_0$ is a $\lambda$-minimal diffusion if, whenever $Y$ is another time-homogeneous diffusion in natural scale, with $X_{T_\lambda} \sim Y_{T_\lambda}$ where ${T_\lambda}$ an independent exponential random variable with mean $\frac{1}{\lambda}$, then $\E_{x_0}[L_{{T_\lambda}}^{x_0}(X)] \le \E_{x_0}[L_{{T_\lambda}}^{x_0}(Y)]$.

We say that $X$ is a minimal diffusion in natural scale if $X$ is $\lambda$-minimal for all $\lambda>0$.
\end{definition}

By Proposition \ref{p:correspondence},  the $\lambda$-minimal local-martingale diffusion corresponds to the minimal BLJ embedding $\tau_{\kappa_0}$. As an analogue of the notion of minimality for the BLJ solution to the Skorokhod Embedding Problem, minimality is a natural probabilistic property. A minimal diffusion has the lowest value of $\E_{x_0}[L^y_{T_\lambda}]$ for all $y>0$ among the diffusions with a given law $\mu$ at time ${T_\lambda}$ started at $x_0$ (this follows from Theorem~\ref{t:main}). In terms of diffusion dynamics, the minimal diffusion moves around the state space slower than all other diffusions with the same exponential time law. 

Minimality is a necessary condition for $X_{t \wedge H_a \wedge H_b}$ to be a martingale. For instance, if $a$ is finite and $b$ is infinite, then $X_{t \wedge H_a \wedge H_b}$ is a martingale if and only if $X$ is minimal and $x_0 \leq \bar{x}_\mu$, i.e. $b$ is not an entrance boundary. When $I$ is a finite interval, minimality is a more natural property than the martingale property; every (stopped) diffusion on a finite interval is a martingale diffusion, but there is only one minimal diffusion for every exponential time law. 

Note also that these definitions extend in an obvious way to diffusions which are natural scale: a diffusion which is not in natural scale is minimal if and only if it is minimal when it is mapped into natural scale. (See Example~\ref{ex:Jacobi}.)

We collect these observations in the following result:
\begin{theorem}
  \label{thm:minimal}
  Let $X$ be a time-homogenous diffusion in natural scale. Then the following are equivalent:
  \begin{enumerate}
  \item $X$ is $\lambda$-minimal;
  \item if $X_{T_\lambda} \sim \mu$ and $W_\lambda$ is the Wronskian of $X$, then:
    \begin{equation*}
      1/W_\lambda=\max\{C^\mu(x_0),P^\mu(x_0)\};
    \end{equation*}
  \item $X$ has at most one entrance boundary;
  \item $X$ is a minimal diffusion. 
  \end{enumerate}
\end{theorem}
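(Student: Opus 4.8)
The plan is to establish the chain of equivalences by showing $(1)\Leftrightarrow(2)$, then $(2)\Leftrightarrow(3)$, and finally folding in $(4)$ via the definition of minimal diffusion together with the $\lambda$-minimality already proven for each $\lambda$. The key engine throughout is Theorem~\ref{t:main} together with Corollary~\ref{c:lpotential}, which express the expected local time $\E_{x_0}[L^{x_0}_{T_\lambda}] = \lambda u_\lambda(x_0,x_0) \cdot \tfrac{1}{\lambda}$ (up to the normalisation there) directly in terms of $1/W_\lambda$ and the data $C^\mu(x_0), P^\mu(x_0)$. Indeed, from Corollary~\ref{c:lpotential} we read off $u_\lambda(x_0,x_0) = 2/W_\lambda$, so $\E_{x_0}[L^{x_0}_{T_\lambda}] = \kappa = 2/W_\lambda$; minimising expected local time at $x_0$ over all competing diffusions $Y$ with the same exponential-time law is therefore exactly the problem of minimising $2/W_\lambda$ subject to the positivity constraint on the speed measure.

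For $(1)\Leftrightarrow(2)$, I would argue as follows. By the remark immediately after Theorem~\ref{t:main}, any diffusion with $X_{T_\lambda}\sim\mu$ must have $1/W_\lambda \ge \max\{C^\mu(x_0),P^\mu(x_0)\}$, since the speed measure in \eqref{eq:speeddecomp} must be a positive measure. Since $\E_{x_0}[L^{x_0}_{T_\lambda}] = 2/W_\lambda$ is strictly increasing in $1/W_\lambda$, the expected local time at $x_0$ is minimised precisely when $1/W_\lambda$ attains its smallest admissible value, namely $\max\{C^\mu(x_0),P^\mu(x_0)\}$. This is exactly the content of $(2)$, so $\lambda$-minimality (which by definition asks that $\E_{x_0}[L^{x_0}_{T_\lambda}(X)]\le \E_{x_0}[L^{x_0}_{T_\lambda}(Y)]$ for all competitors $Y$) holds if and only if $(2)$ holds.

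For $(2)\Leftrightarrow(3)$, I would invoke Lemma~\ref{l:boundary}, which characterises an entrance boundary at $a$ (resp.\ $b$) by $1/W_\lambda = P^\mu(x_0)$ (resp.\ $1/W_\lambda = C^\mu(x_0)$). The condition $1/W_\lambda = \max\{C^\mu(x_0), P^\mu(x_0)\}$ forces $1/W_\lambda$ to equal \emph{at most one} of the two quantities (both only in the degenerate tie case $C^\mu(x_0)=P^\mu(x_0)$, i.e.\ $x_0=\bar x_\mu$, where the constraint is saturated at both ends), so $(2)$ is equivalent to having at most one entrance boundary, which is $(3)$. Finally, for the equivalence with $(4)$: a minimal diffusion is by Definition~\ref{d:minimality} one that is $\lambda$-minimal for \emph{all} $\lambda>0$, so trivially $(4)\Rightarrow(1)$; the reverse requires noting that condition $(2)$ — and hence $(3)$, which is phrased purely in terms of the boundary structure — is in fact independent of $\lambda$, because whether $a$ or $b$ is an entrance boundary is a property of the speed measure $m$ through $\int_{a+}(|x|+1)\,m(dx)$, not of the particular exponential parameter. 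Thus $\lambda$-minimality for one $\lambda$ already forces the $\lambda$-independent boundary condition $(3)$, which in turn yields $\lambda$-minimality for every $\lambda$, giving $(4)$.

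The main obstacle I anticipate is the last point: carefully justifying that $\lambda$-minimality for a \emph{single} $\lambda$ propagates to all $\lambda$. One must verify that the characterisation $(2)$, stated with $\lambda$-dependent Wronskian $W_\lambda$, is genuinely equivalent to the $\lambda$-free boundary statement $(3)$ for every $\lambda$ simultaneously — that is, that the speed measure producing a minimal diffusion at one exponential rate is the same speed measure that is minimal at every rate. This hinges on the fact that Lemma~\ref{l:boundary} shows the entrance-boundary dichotomy depends only on the tail behaviour of $m$ and not on $\lambda$, so the decomposition \eqref{eq:speeddecomp} with the saturating choice of $1/W_\lambda$ produces a single consistent $m$ across all $\lambda$. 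I would want to state this carefully to rule out the a priori possibility of a diffusion that is minimal at one rate but not another.
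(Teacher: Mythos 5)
Your overall route --- $(1)\Leftrightarrow(2)$ via Corollary~\ref{c:lpotential} together with the positivity constraint $1/W_\lambda\ge\max\{C^\mu(x_0),P^\mu(x_0)\}$ from Theorem~\ref{t:main}, then $(2)\Leftrightarrow(3)$ via Lemma~\ref{l:boundary}, then promoting a single $\lambda$ to all $\lambda$ because the boundary classification is $\lambda$-free --- is exactly the paper's (very terse) argument, and your treatment of $(1)\Leftrightarrow(2)$ and of $(1)\Leftrightarrow(4)$ is sound. However, the $(2)\Leftrightarrow(3)$ step is inverted as written. Lemma~\ref{l:boundary} says that $\int_{a+}(|x|+1)\,m(dx)=\infty$ if and only if $1/W_\lambda=P^\mu(x_0)$, and, per the Kotani discussion immediately following that lemma, divergence of this integral is precisely the condition for the boundary \emph{not} to be an entrance boundary; likewise $1/W_\lambda=C^\mu(x_0)$ means $b$ is \emph{not} an entrance boundary. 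You state the opposite (``characterises an entrance boundary at $a$ by $1/W_\lambda=P^\mu(x_0)$''), and your quantifier is correspondingly wrong: $1/W_\lambda=\max\{C^\mu(x_0),P^\mu(x_0)\}$ means $1/W_\lambda$ equals \emph{at least} one of the two quantities (both exactly in the tie case $x_0=\bar{x}_\mu$), not at most one. With your reading, the tie case would yield \emph{two} entrance boundaries, contradicting the very conclusion $(3)$ you then assert, so the step does not close as written.

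The corrected chain is: since $1/W_\lambda\ge C^\mu(x_0)$ and $1/W_\lambda\ge P^\mu(x_0)$ always hold, equality with either quantity forces equality with their maximum; hence $(2)$ holds iff $1/W_\lambda$ equals at least one of $C^\mu(x_0)$, $P^\mu(x_0)$, iff (by Lemma~\ref{l:boundary}) at least one of the two boundaries is not an entrance boundary, iff $X$ has at most one entrance boundary. Equivalently, in the paper's phrasing, $(2)$ forces $\E_a[e^{-\lambda H_{x_0}}]=0$ or $\E_b[e^{-\lambda H_{x_0}}]=0$. Once this sign is fixed, your final step --- that $(3)$ is a statement about the speed measure alone and hence independent of $\lambda$, so $\lambda$-minimality for one $\lambda$ propagates to all $\lambda$ and gives $(4)$ --- goes through exactly as you describe and coincides with the paper's argument.
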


\begin{proof}
The equivalence of the first two statements follows from Corollary \ref{c:lpotential}. Next, if $X$ is minimal 
then $\E_a[e^{-\lambda H_{x_0}}]=0$ or $\E_b[e^{-\lambda H_{x_0}}]=0$, and hence $X$ has at most one entrance boundary.
Finally, the properties of the boundary points are independent of the choice of $\lambda$, so if $X$ has at most one entrance boundary, then $X$ is minimal.  
\end{proof}

\begin{example} \label{ex:Jacobi}
  A natural class of non-minimal diffusions is the following class of
  Jacobi diffusions in natural scale. On the domain $[0,b]$
  let \[dX_t=(\alpha-\beta X_t)dt+\sigma \sqrt{X_t(b-X_t)}, \ \ X_0
  \in (0,b),\] where $\frac{2\beta}{\sigma^2}-\frac{2\alpha}{\sigma^2
    b}-1 >0$ and $\frac{2\alpha}{\sigma^2 b}-1 >0$. The eigenfunctions
  $\varphi_\lambda$ and $\phi_\lambda$ can be calculated explicitly as
  hypergeometric functions, see Albanese and Kuznetsov \cite{AK}. It
  is shown in \cite{AK} that $\lim_{x \downarrow 0} \varphi_\lambda(x)
  > 0$ and $\lim_{x \uparrow b} \phi_\lambda(x) > 0$.  Let $s$ be the
  scale function of $X$ and consider $Y=s(X)$ with eigenfunctions
  $\overline{\varphi}_\lambda$ and $\overline{\phi}_\lambda$. Then
  $\lim_{x \downarrow s(0)} \overline{\varphi}_\lambda(x) = \lim_{x
    \downarrow 0} \varphi_\lambda(s^{-1}(x)) > 0$. Similarly $\lim_{x
    \uparrow s(b)} \overline{\phi}(x) > 0$. Hence $Y$ is non-minimal.
\end{example}

\begin{example}
  Let $I=(0,1)$ and let $X=(X_t)_{t \geq 0}$ be a diffusion with $X_0=1/2$ and speed measure $m(dx)=\frac{dx}{x^2(1-x^2)}$ (sometimes known as the Kimura martingale cf. \cite{huillet2}). The increasing and decreasing eigenfunctions are $\varphi(x)=\frac{2x^2}{1-x}$ and $\phi(x)=\frac{2(1-x)^2}{x}$ respectively.  Note that both boundaries are natural, so this diffusion is minimal.

  Non-minimal diffusions with the same exponential time law have increasing/decreasing eigenfunctions $\varphi_\delta(x)=\varphi(x)+\delta$ and $\phi_\delta(x)=\phi(x)+\delta$, for some $\delta > 0$, and with reflection at the endpoints. Accordingly, the consistent speed measures indexed by $\delta$ are given by $m_\delta(dx)=\frac{dx}{\sigma_\delta^2(x)}$, where
\[
\sigma_\delta^2(x)=
\begin{cases}
  \left(\frac{\delta}{2}(1-x)+x^2\right)(1-x)^2 & x < \frac{1}{2}\\
  \left(\frac{\delta}{2}x+(1-x)^2\right)x^2 & x \ge \frac{1}{2}.
\end{cases}
\]
\end{example}

\begin{example}
Consider Brownian motion on $[0,2]$ with instantaneous reflection at the endpoints and initial value $B_0=x_0 \in [1,2)$. Let $\lambda=1/2$, then the $1/2$-eigenfunctions solve $\frac{d^2 f}{dx^2} = f$, so the increasing eigenfunction (up to a constant) is $\varphi(x)=\frac{\cosh(x)}{\cosh(x_0)}$ and 
$\phi(x)=\frac{\cosh(2-x)}{\cosh(2-x_0)}$. Note that this diffusion is non-minimal. To construct the minimal
diffusion with the same marginal law at an exponential time,  let $\eta=\cosh(x_0)^{-1}$ and set $\varphi_\eta(x)=\varphi(x)-\eta=\frac{\cosh(x)-1}{\cosh{x_0}}$, and similarly
$\phi_\eta(x)=\phi(x)-\eta=\frac{\cosh(2-x)}{\cosh(2-x_0)}-\frac{1}{\cosh(x_0)}$. The diffusion co-efficient of 
the minimal diffusion is given by $\sigma^2(x) = \frac{\varphi_\eta(x)}{\varphi''_\eta(x)}=1-\frac{1}{\cosh(x)}$ for 
$x \in [0,x_0]$ and by $\sigma^2(x)=\frac{\phi_\eta(x)}{\phi''_\eta(x)}=1-\frac{\cosh(2-x_0)}{\cosh(2-x)\cosh(x_0)}$ for $x \in [x_0,1]$. 
\end{example}

\bibliography{mindiff}

\begin{thebibliography}{10}

\bibitem{AK}
C.~Albanese and A.~Kuznetzov.
\newblock Transformations of {M}arkov processes and classification scheme for
  solvable driftless diffusions.
\newblock {\em Markov Processes and Related Fields}, 15:563--574, 2009.

\bibitem{AzemaYor:79a}
J.~Az{\'e}ma and M.~Yor.
\newblock \abc{} {U}ne solution simple au probl\`eme de {S}korokhod.
\newblock In {\em S\'eminaire de Probabilit\'es, XIII (Univ. Strasbourg,
  Strasbourg, 1977/78)}, volume 721 of {\em Lecture Notes in Math.}, pages
  90--115. Springer, Berlin, 1979.

\bibitem{AzemaYor:79b}
J.~Az{\'e}ma and M.~Yor.
\newblock Le probl\`eme de {S}korokhod: compl\'ements \`a ``{U}ne solution
  simple au probl\`eme de {S}korokhod''.
\newblock In {\em S\'eminaire de Probabilit\'es, XIII (Univ. Strasbourg,
  Strasbourg, 1977/78)}, volume 721 of {\em Lecture Notes in Math.}, pages
  625--633. Springer, Berlin, 1979.

\bibitem{BLJ}
J.~Bertoin and Y.L. Jan.
\newblock Representation of measures by balayage from a regular recurrent
  point.
\newblock {\em The Annals of Probability}, 20(1):538--548, 1992.

\bibitem{ChaconWalsh:76}
R.~V. Chacon and J.~B. Walsh.
\newblock One-dimensional potential embedding.
\newblock In {\em S\'eminaire de Probabilit\'es, X (Pr\`emiere partie, Univ.
  Strasbourg, Strasbourg, ann\'ee universitaire 1974/1975)}, pages 19--23.
  Lecture Notes in Math., Vol. 511. Springer, Berlin, 1976.

\bibitem{cox_roots_2013}
Alexander M.~G. Cox and Jiajie Wang.
\newblock Root’s barrier: Construction, optimality and applications to
  variance options.
\newblock {\em The Annals of Applied Probability}, 23(3):859--894, June 2013.

\bibitem{Cox:08}
A.M.G. Cox.
\newblock Extending {C}hacon-{W}alsh: minimality and generalised starting
  distributions.
\newblock In {\em S{\'e}minaire de probabilit{\'e}s XLI}, pages 233--264.
  Springer, 2008.

\bibitem{CoxHobson:06}
A.M.G. Cox and D.G. Hobson.
\newblock Skorokhod embeddings, minimality and non-centred target
  distributions.
\newblock {\em Probability Theory and Related Fields, 2006}, 135(3):395--414,
  2006.

\bibitem{CoxHobsonObloj:2011}
A.M.G. Cox, D.G. Hobson, and J.~Ob{\l}{\'o}j.
\newblock Time-homogeneous diffusions with a given marginal at a random time.
\newblock {\em ESAIM: Probability and Statistics}, 15:11--24, 2011.

\bibitem{delbaen2002}
F.~Delbaen and H.~Shirakawa.
\newblock No arbitrage condition for positive diffusion price processes.
\newblock {\em Asia-Pacific Financial Markets}, 9(3):159--168, 2002.

\bibitem{feller}
W.~Feller.
\newblock The birth and death processes as diffusion processes.
\newblock {\em J. Math. Pures Appl}, 38:301--345, 1959.

\bibitem{huillet2}
T.~Huillet.
\newblock On the {K}arlin--{K}imura approaches to the {W}right--{F}isher
  diffusion with fluctuating selection.
\newblock {\em Journal of Statistical Mechanics: Theory and Experiment},
  2011(02):P02016, 2011.

\bibitem{mckean}
K.~It{\^{o}} and H.P. McKean.
\newblock {\em Diffusion Processes and their Sample Paths}.
\newblock Springer-Verlag, 1974.

\bibitem{MK:12}
M.~Klimmek.
\newblock The {W}ronskian parametrises the class of diffusions with a given
  distribution at a random time.
\newblock {\em Electronic Communications in Probability}, 17, 2012.

\bibitem{Kotani}
S.~Kotani.
\newblock On a condition that one-dimensional diffusion processes are
  martingales.
\newblock In {\em S\'eminaire de Probabilit\'es, XXXIX (Univ. Strasbourg,
  Strasbourg)}, volume 1874 of {\em Lecture Notes in Math.}, pages 149--156.
  Springer, Berlin, 2006.

\bibitem{kotaniwatanabe}
S~Kotani and S~Watanabe.
\newblock Krein's spectral theory of strings and generalized diffusion
  processes.
\newblock In {\em Functional analysis in Markov processes}, pages 235--259.
  Springer, 1982.

\bibitem{Monroe:72}
I.~Monroe.
\newblock On embedding right continuous martingales in {B}rownian motion.
\newblock {\em Ann. Math. Statist.}, 43:1293--1311, 1972.

\bibitem{Obloj:04}
J.~Ob{\l}{\'o}j.
\newblock The {S}korokhod embedding problem and its offspring.
\newblock {\em Probab. Surv.}, 1:321--390 (electronic), 2004.

\bibitem{Perkins:86}
E.~Perkins.
\newblock The {C}ereteli-{D}avis solution to the ${H}\sp 1$-embedding problem
  and an optimal embedding in {B}rownian motion.
\newblock In {\em Seminar on {S}tochastic {P}rocesses, 1985 (Gainesville, Fla.,
  1985)}, pages 172--223. Birkh\"auser Boston, Boston, MA, 1986.

\bibitem{rogers1989guided}
L.C.G. Rogers.
\newblock A guided tour through excursions.
\newblock {\em Bulletin of the London mathematical Society}, 21(4):305--341,
  1989.

\bibitem{rogers}
L.C.G. Rogers and D.~Williams.
\newblock {\em Diffusions, {M}arkov Processes and {M}artingales, {V}olume 2}.
\newblock Cambridge University Press, 2000.

\bibitem{Salminen}
P.~Salminen.
\newblock Optimal stopping of one-dimensional diffusions.
\newblock {\em Math. Nachrichten}, 124:85--101, 1985.

\end{thebibliography}

\end{document}